\long\def\symbolfootnote[#1]#2{\begingroup%
\def\thefootnote{\fnsymbol{footnote}}\footnote[#1]{#2}\endgroup}
\newtheorem{theorem}{Theorem}[section]
\newtheorem{proposition}[theorem]{Proposition}
\newtheorem{corollary}[theorem]{Corollary}
\newtheorem{lemma}[theorem]{Lemma}
\newtheorem{question}[theorem]{Question}
\theoremstyle{definition}
\newtheorem{remark}[theorem]{Remark}
\newtheorem{definition}[theorem]{Definition}
\newtheorem{example}[theorem]{Example}
\newtheorem*{namedtheorem}{\theoremname}
\newcommand{\theoremname}{testing}
\newcommand{\Ends}{{\text{Ends}}}
\newcommand{\Ac}{\mathcal{A}}
\newcommand{\AC}{\mathcal{AC}}
\newcommand{\Cc}{\mathcal{C}}
\newcommand{\G}{\mathcal{G}}
\def\Mod{\operatorname{Mod}}
\def\Nonsep{\operatorname{NonSep}}
\def\Outer{\operatorname{Outer}}
\def\Ends{\operatorname{Ends}}
\def\diam{\operatorname{diam}}
\begin{document}
\title[Graphs for infinite-type surfaces]{On the geometry of graphs associated to infinite-type surfaces}
\author{Javier Aramayona   \& Ferr\'an Valdez}
\thanks{The first author was partially funded by grants RYC-2013-13008 and MTM2015-67781. The second author was  supported by PAPIIT projects IN100115, IN103411 and IB100212.}

%

\begin{abstract}
Consider a connected orientable surface $S$ of infinite topological type, i.e. with infinitely-generated fundamental group.

Our main purpose is to give a description of the geometric structure of an arbitrary subgraph of the arc graph of $S$, subject to some rather general conditions. As special cases, we recover results of J. Bavard \cite{Bavard} and Aramayona-Fossas-Parlier \cite{AFP}. 

In the second part of the paper, we obtain a number of results on the geometry of connected, $\Mod(S)$-invariant subgraphs of the curve graph 
of $S$, in the case when the space of ends of $S$ is homeomorphic to a Cantor set.

\end{abstract}

\maketitle

\section{Introduction} 

 There has been a recent surge of activity around mapping class groups of infinite-type surfaces, i.e. with infinitely-generated fundamental group. The motivation for studying these groups stems from several places, as we now briefly describe.  

First, infinite-type surfaces appear as inverse limits of surfaces of finite type. In particular, infinite-type mapping class groups  are useful in the study of asymptotic and/or stable properties of their finite-type counterparts. This is the approach taken  by Funar-Kapoudjian \cite{FK1}, where the authors identify the  homology of an
infinite-type mapping class group with the stable homology of the mapping class groups of its finite-type subsurfaces.

In a related direction, a number of well-known groups appear as subgroups of the mapping class group of infinite-type surfaces. For instance,  Funar-Kapoudjian \cite{FK2} realized Thompson's group $T$ as a topologically-defined subgroup of the mapping class group of a certain infinitely-punctured sphere. 

A third piece of motivation for studying mapping class groups of infinite-type surfaces comes from dynamics, as explained by D. Calegari in \cite{Calegari}. More concretely, let $S$ be a closed surface, $P\subset S$ a finite subset, and consider the group $\mathrm{Homeo}(S, P)$ of those homeomorphisms of $S$ that preserve $P$ setwise. Let $G < \mathrm{Homeo}(S, P)$ be a subgroup that acts freely on $S-P$. Then $G$ admits a natural homeomorphism to $\Mod(S - K, P)$, where $K$ is either a finite set or a Cantor set. See \cite{Calegari} for more details. 

%
%
%
%
%
%

\subsection{Combinatorial models} A large number of problems about mapping class groups of finite-type surfaces may be understood through the various simplicial complexes built from curves and/or arcs on surfaces. Notable examples of these are the {\em curve graph} $\Cc(S)$ and the {\em arc graph} $\Ac(S)$; see Section \ref{sec:defcurves} for definitions. When $S$ has finite  type, a useful feature of these complexes is that, with respect to their standard path-metric, they are hyperbolic spaces of infinite diameter; see \cite{MS} and \cite{MM1}, respectively. 

In sharp contrast, in the case of an infinite-type surface these complexes often have finite diameter; see Section \ref{sec:defcurves}. This obstacle was first overcome by J. Bavard \cite{Bavard} in the particular case when $S$ is a
sphere minus a Cantor set and an isolated point. Indeed, she proved that a certain subgraph of the arc graph is hyperbolic and has infinite diameter, and used this to construct non-trivial quasi-morphisms from the mapping class group of the plane minus a Cantor set. Subsequently, Aramayona-Fossas-Parlier \cite{AFP} have produced similar graphs for arbitrary surfaces, subject to certain conditions on the set of punctures of $S$. However, the definition of these subgraphs is surprisingly subtle, and small variations in the definition may produce graphs that have finite diameter or are not hyperbolic.

\subsubsection{Arc graphs} Our main goal is to give a unified description of the possible geometric structures of an arbitrary subgraph of the arc graph of an infinite-type surface, subject to some rather natural conditions on the given graph. First, we will require that it be {\em sufficiently invariant}, that is invariant under $\Mod(S,P)$, for some (possibly empty) finite set $P$ of punctures. In addition, we will assume that every such graph satisfies the {\em projection property}. This property is needed only for technical reasons, and thus we refer the reader to Section \ref{sec:arcproofs} for details. However, we stress that this restriction is easy to check, and often automatically satisfied, once one is given an explicit subgraph $\G(S)$ of $\Ac(S)$. This is the case with the graphs considered in \cite{AFP} and \cite{Bavard}; see Remark \ref{rem:projection} below.

 
 Before we state our result, recall from \cite{Schleimer} that a {\em witness}\footnote{This definition is due to Schleimer \cite{Schleimer}, who referred to witnesses as {\em holes}. The word ``witness" has been suggested to us by S. Schleimer.} of a subgraph $\G(S)$ of $\mathcal{A}(S)$ is an essential subsurface $Y$ of $S$ such that every vertex of $\G(S)$ intersects $Y$ essentially. Given a witness $Y$, we denote by $\G(Y)$ the subgraph of $\G(S)$ spanned by those vertices of $\G(S)$ that are entirely contained in $Y$. Finally, say that a subgraph of $\Ac(S)$ is {\em sufficiently invariant} if it is invariant under $\Mod(S,P)$,  the $\Mod(S)$-stabiliser of a finite set $P$ of punctures; see Definition \ref{def:invariant}. 
We will prove:

%

\begin{theorem}
Let $S$ be a connected orientable surface of infinite type, and $\G(S)$ a connected, sufficiently invariant subgraph of $\Ac(S)$ with the projection property. 
\begin{enumerate}
\item If every witness of $\G(S)$ has infinitely many punctures, then $\G(S)$ has finite diameter. 
\item Otherwise, $\G(S)$ has infinite diameter. Moreover:
\begin{enumerate}
\item[(2a)] If every two witnesses of $\G(S)$ intersect, then $\G(S)$ is hyperbolic if and only if $\G(Y)$ is uniformly hyperbolic, for every finite-type witness $Y$. 
\item[(2b)] If $\G(S)$ has two disjoint witnesses of finite type, then it is not hyperbolic. 
\end{enumerate}
\end{enumerate}

\label{thm:mainhyp}
\end{theorem}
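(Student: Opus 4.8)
The plan is to treat the four assertions separately, in every case exploiting the interplay between the subsurface projections $\pi_Y\colon \G(S)\to \Ac(Y)$ and the action of $\Mod(S,P)$ furnished by sufficient invariance. The projection property is exactly what guarantees that each $\pi_Y$ is coarsely well defined, coarsely $\Mod(S,P)$-equivariant, and coarsely Lipschitz with constants \emph{independent of} $Y$; I would record this uniform-Lipschitz statement at the outset, since every later step leans on it. For part (1), I would show $\diam \G(S)\le 2$. Given vertices $a,b$, enclose $a\cup b$ in an essential subsurface $W$ of finite type (a regular neighborhood of $a\cup b$ together with the punctures it limits onto). By hypothesis every witness carries infinitely many punctures, so the finite-type surface $W$ is not a witness; hence some vertex $c$ of $\G(S)$ fails to cross $W$ essentially, i.e. is disjoint from both $a$ and $b$. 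Then $a - c - b$ is a path of length $2$, so $\G(S)$ has finite diameter.

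For the infinite-diameter claim in (2), I would fix a finite-type witness $Y$, which exists by hypothesis. Since $Y$ is a witness, $\pi_Y$ is defined on all of $\G(S)$, and since $Y$ has finite type, $\Ac(Y)$ has infinite diameter. The stabiliser of $Y$ in $\Mod(S,P)$ contains mapping classes supported in $Y$; choosing a pseudo-Anosov $f$ on $Y$ (extended by the identity), sufficient invariance makes $\G(S)$ invariant under $f$, and coarse equivariance gives $\pi_Y(f^n\cdot a)\asymp f^n\cdot \pi_Y(a)$, an unbounded sequence in $\Ac(Y)$. As $\pi_Y$ is coarsely Lipschitz, the orbit $\{f^n\cdot a\}$ is unbounded in $\G(S)$, which therefore has infinite diameter.

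The core is (2a). For the easy implication, assume $\G(S)$ is $\delta_0$-hyperbolic. For each finite-type witness $Y$ the projection $\pi_Y$ restricts to a coarse Lipschitz retraction $\G(S)\to \G(Y)$ (coarse left inverse to the inclusion), which exhibits $\G(Y)$ as a coarse Lipschitz retract and hence as hyperbolic; crucially, the resulting hyperbolicity constant depends only on $\delta_0$ and on the \emph{uniform} Lipschitz constant of the projections, so it is independent of $Y$ — this is precisely the required uniform hyperbolicity. For the converse, I would invoke a guessing-geodesics criterion (in the spirit of Bowditch, and of the unicorn paths of Hensel--Przytycki--Webb): construct candidate paths in $\G(S)$ between arbitrary vertices and verify the connectedness and thinness axioms. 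The pairwise-intersection hypothesis on witnesses is exactly the ``no orthogonality'' condition that rules out product regions, so the distance between two vertices should be coarsely detected in a single witness $Y$; uniform hyperbolicity of $\G(Y)$ then controls the local geometry there, and the overlap of witnesses lets one transfer this control along the candidate path. Reconciling the global candidate paths with the uniformly hyperbolic local pieces, while the witnesses genuinely overlap, is where I expect the main difficulty to lie.

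Finally, for (2b) I would produce a quasi-isometrically embedded flat, contradicting hyperbolicity. Given disjoint finite-type witnesses $Y_1,Y_2$, sufficient invariance yields pseudo-Anosov classes $f_1,f_2$ supported in $Y_1,Y_2$. Because the supports are disjoint, the two projections vary independently along the family $f_1^{m}f_2^{n}\cdot a$: the uniform Lipschitz bound gives $d_{\G(S)}\bigl(f_1^{m}f_2^{n}a,\,f_1^{m'}f_2^{n'}a\bigr)\gtrsim \max\bigl(|m-m'|,|n-n'|\bigr)$ from the witness projections, while a word-length estimate gives the matching upper bound $\lesssim |m-m'|+|n-n'|$. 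Hence $(m,n)\mapsto f_1^{m}f_2^{n}\cdot a$ is a quasi-isometric embedding of $\Z^2$ into $\G(S)$. Since a geodesic hyperbolic space admits no quasi-isometrically embedded $\Z^2$, the graph $\G(S)$ is not hyperbolic.
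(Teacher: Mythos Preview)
Your arguments for (1), the infinite-diameter claim in (2), the ``only if'' direction of (2a), and (2b) are all essentially correct and close in spirit to the paper. Two comments: your bound in (1) is $\diam\le 2$ whereas the paper proves $\le 4$; your argument hinges on the claim that any two arcs $a,b$ fill a finite-type subsurface, which needs that $a$ and $b$ have finite geometric intersection number. This is where the paper is cautious and invokes sufficient invariance to produce an auxiliary arc with finite intersection with both; you should at least flag the finite-intersection issue. Also, your description of the projection property is slightly off: the coarse Lipschitz property of $\pi_Y$ comes for free from disjointness preservation (this is Lemma~\ref{lem:lip}), while the projection property in the paper's sense says $\pi_Y(\G(S))$ is uniformly quasi-isometric to $\G(Y)$ rel $\G(Y)$, which is what makes the inclusion $\G(Y)\hookrightarrow\G(S)$ a uniform quasi-isometric embedding (Lemma~\ref{lem:qi}).

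The real gap is the ``if'' direction of (2a). You propose to build candidate paths and run a guessing-geodesics argument, and you correctly identify this as the hard step --- but the paper avoids this machinery entirely with a one-line observation you are missing: \emph{any} geodesic triangle $T$ in $\G(S)$ has finitely many vertices, all of which are arcs, so there is a single finite-type witness $Z$ containing every vertex of $T$. Then $T$ sits inside $\G(Z)$; by the uniform quasi-isometric embedding of $\G(Z)$ in $\G(S)$ the sides of $T$ are uniform quasi-geodesics in $\G(Z)$, and uniform hyperbolicity of $\G(Z)$ gives a uniform center. No global candidate paths, no Bowditch criterion, no need to ``reconcile local pieces along overlapping witnesses''. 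Note also that this argument does not actually use the hypothesis that witnesses pairwise intersect; that hypothesis is only the dividing line between (2a) and (2b).
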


We stress that part (2b) of Theorem \ref{thm:mainhyp} is merely a manifestation of Schleimer's {\em Disjoint Witnesses Principle} \cite{Schleimer,MS}, although we have included a proof in Section \ref{sec:arcproofs} for completeness. In addition, we remark that once one is given an {\em explicit} subgraph $\G(S)$ of $\Ac(S)$, it is straightforward to decide what the witnesses of $\G(S)$ are and, in particular, where $\G(S)$ falls in the description offered by Theorem \ref{thm:mainhyp}; see the various corollaries below. 
Finally, we will see in Section \ref{sec:arcproofs} that the assumptions that $\G(S)$
 has the projection property  will not be used in the proof of part (1) of Theorem \ref{thm:mainhyp}, and thus that part holds in slightly more generality; this remark will be useful for the various corollaries of Theorem \ref{thm:mainhyp}, see below.

As a special case of Theorem \ref{thm:mainhyp}, we recover the main result of Aramayona-Fossas-Parlier \cite{AFP}; see Section \ref{sec:arcproofs} for the necessary definitions:  

\begin{corollary}[\cite{AFP}]
Let $S$ be a connected orientable surface, and $P$ a non-empty finite set of isolated punctures. Then, the relative arc graph $\Ac(S,P)\subset \Ac(S)$ is hyperbolic and has infinite diameter. 
\label{cor:afp}
\end{corollary}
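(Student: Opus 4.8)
The plan is to apply Theorem~\ref{thm:mainhyp} to the graph $\G(S) = \Ac(S,P)$, whose vertices are the essential arcs of $S$ with both endpoints in $P$ (I take $S$ of infinite type, as in the theorem; the finite-type case is classical). First I would check the hypotheses. Since every element of $\Mod(S,P)$ preserves $P$ setwise, it carries arcs with endpoints in $P$ to arcs with endpoints in $P$, so $\Ac(S,P)$ is $\Mod(S,P)$-invariant and hence sufficiently invariant; connectedness is standard via a surgery argument on arcs; and the projection property is recorded in Remark~\ref{rem:projection}, which I would simply invoke. With the hypotheses in hand, the entire proof reduces to locating $\Ac(S,P)$ within the trichotomy of Theorem~\ref{thm:mainhyp}, which rests on understanding the witnesses of $\Ac(S,P)$.

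The first point is that not every witness has infinitely many punctures. Indeed, since $P$ is finite, any finite-type essential subsurface $Y$ containing $P$, together with enough topology to carry an essential arc with endpoints in $P$, is a witness: every arc with endpoints in $P$ then emanates from punctures of $Y$ and so meets $Y$ essentially. As $Y$ has finitely many punctures, part~(2) of Theorem~\ref{thm:mainhyp} applies and gives that $\Ac(S,P)$ has infinite diameter.

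The crux is to show that any two witnesses intersect, placing us in case~(2a) and ruling out~(2b). The key observation is that a witness $Y$ must capture every puncture of $P$ up to a peripheral once-punctured disk: if some $p \in P$ lay in a complementary component of $Y$ carrying additional topology (genus, or a second puncture), one could draw an essential arc with endpoints in $P$ inside that component, disjoint from $Y$, contradicting that $Y$ is a witness. Granting this, suppose $Y_1, Y_2$ were disjoint witnesses and pick $p \in P$. If $p$ lies in $Y_1$, then $Y_1$ is contained in the complementary component of $Y_2$ that contains $p$; but by the capture observation applied to $Y_2$ that component is a once-punctured disk, which cannot contain the essential subsurface $Y_1$ — a contradiction. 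The remaining configuration, where $p$ lies outside both witnesses, is ruled out by the same peripheral-disk analysis applied to the nested punctured-disk neighborhoods of $p$ cut off by $Y_1$ and $Y_2$. Hence all witnesses pairwise intersect.

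It then remains to verify the criterion of case~(2a): that $\G(Y)$ is uniformly hyperbolic over all finite-type witnesses $Y$. Here $\G(Y)$ is precisely the relative arc graph $\Ac(Y, P\cap Y)$ of the finite-type surface $Y$, and I would invoke the uniform hyperbolicity of arc graphs of finite-type surfaces, with a hyperbolicity constant independent of $Y$. Combining this with infinite diameter and case~(2a) yields that $\Ac(S,P)$ is hyperbolic of infinite diameter. I expect the main obstacle to be the witness analysis of the previous paragraph — in particular, ruling out disjoint witnesses so as to land squarely in case~(2a) rather than in a configuration not covered by the theorem — with the uniform hyperbolicity of the finite-type pieces being an essential but largely citable input.
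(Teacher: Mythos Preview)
Your overall strategy matches the paper's exactly: verify the hypotheses of Theorem~\ref{thm:mainhyp}, exhibit a finite-type witness to land in part~(2), argue that all witnesses intersect so as to be in case~(2a), and then invoke uniform hyperbolicity of the finite-type pieces via \cite{HPW}. In that sense the proposal is correct in outline.

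There is, however, a definitional slip worth flagging. In this paper $\Ac(S,P)$ is the subgraph spanned by arcs with \emph{at least one} endpoint in $P$, not both (see Section~\ref{sec:arcproofs} just before Remark~\ref{rem:projection}). With the correct definition the witness analysis you regard as the ``main obstacle'' becomes almost trivial: any witness must contain every puncture of $P$ (an arc emanating from $p\in P$ and landing on \emph{any} other puncture already lies in $\Ac(S,P)$, so a complementary component containing $p$ cannot carry a second puncture at all), and two subsurfaces each containing $P$ obviously intersect. The paper accordingly dispatches the whole corollary in three lines and does not even state the intersection of witnesses explicitly. Your more elaborate ``capture up to a peripheral once-punctured disk'' argument is not wrong, but it is tuned to the both-endpoints definition and is more work than needed here. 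Also note that $\G(Y)$ is then $\Ac(Y,P)$ in the at-least-one sense, and it is this graph that the paper cites \cite{HPW} for; make sure your citation of uniform hyperbolicity covers that version.
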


Once again, we stress that this result was first proved by Bavard \cite{Bavard} in the special case when $S$ is a sphere minus a Cantor set and one isolated puncture. 

We will see in Corollary \ref{cor:Pnonisolated} in Section \ref{sec:arcproofs} that, on the other hand, if $P$ contains a puncture that is not isolated, Theorem \ref{thm:mainhyp} implies that $\Ac(S,P)$ has finite diameter. 
More drastically, if $S$ has no isolated punctures at all, then there are no geometrically interesting $\Mod(S)$-invariant subgraphs of $\Ac(S)$: 

\begin{corollary}
Let $S$ be a connected orientable surface with at least one puncture. If $S$ has no isolated punctures, then any connected $\Mod(S)$-invariant subgraph of $\Ac(S)$ has finite diameter.
\label{cor:cantor} 
\end{corollary}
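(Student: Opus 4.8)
The plan is to deduce the statement from part (1) of Theorem \ref{thm:mainhyp}. First I would verify the hypotheses. Since $S$ has a puncture that is not isolated, the space of ends $\Ends(S)$ accumulates at that point and is in particular infinite, so $S$ has infinitely-generated fundamental group, i.e. is of infinite type. Moreover, a connected $\Mod(S)$-invariant subgraph $\G(S)$ is \emph{sufficiently invariant} (take $P=\emptyset$), and, as noted in the discussion following Theorem \ref{thm:mainhyp}, part (1) of that theorem does not use the projection property. Thus everything reduces to the following claim: every witness of $\G(S)$ has infinitely many punctures. Granting this, Theorem \ref{thm:mainhyp}(1) immediately gives that $\G(S)$ has finite diameter.

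To prove the claim I would argue by contraposition, showing that an essential subsurface $Y$ with only finitely many punctures cannot be a witness. Fix any vertex $a_0\in\G(S)$; this exists because $\G(S)$ is connected, hence nonempty, while $\Ac(S)\neq\emptyset$ as $S$ has a puncture. By $\Mod(S)$-invariance the whole orbit $\Mod(S)\cdot a_0$ is contained in $\G(S)$, so it suffices to produce $f\in\Mod(S)$ with $f(a_0)$ disjoint from $Y$: such an $f(a_0)$ is a vertex of $\G(S)$ that fails to meet $Y$, contradicting the definition of a witness.

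The construction of $f$ is the heart of the matter. The arc $a_0$ has a finite-type regular neighbourhood $N$, and cutting $S$ along $a_0$ partitions the remaining topology, namely its genus and, crucially, the ends of $S$, in a definite way. I would then invoke the change-of-coordinates principle for infinite-type surfaces: there is a homeomorphism of $S$ carrying $N$ into the complementary subsurface $\overline{S\setminus Y}$, provided one can embed a copy of $N$ there whose complement is homeomorphic, relative to ends, to the complement of $N$ in $S$. This is exactly where the hypotheses enter: since $Y$ has only finitely many punctures while \emph{no} puncture of $S$ is isolated, removing $Y$ does not simplify the accumulation structure of $\Ends(S)$, so $\overline{S\setminus Y}$ carries the same end-space data as $S$. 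This provides enough room in $\overline{S\setminus Y}$ to realise a copy of $a_0$ together with matching complementary ends, yielding $f$ with $f(a_0)\subset\overline{S\setminus Y}$, hence disjoint from $Y$.

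The step I expect to be the main obstacle is precisely this last matching: one must check that the partition of $\Ends(S)$ induced by $a_0$ can be reproduced inside $\overline{S\setminus Y}$, so that $f(a_0)$ genuinely lies in the $\Mod(S)$-orbit of $a_0$. The absence of isolated punctures is the feature guaranteeing the needed abundance and self-similarity of the ends; by contrast, an isolated puncture could be trapped by a finite-punctured witness, as in \cite{Bavard,AFP}, which is exactly the case this corollary excludes.
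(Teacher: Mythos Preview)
Your proposal is correct and follows essentially the same route as the paper: reduce to part (1) of Theorem~\ref{thm:mainhyp} (noting, as the paper does in Remark~\ref{rem:outofjailfree}, that the projection property is not needed there), and use the classification theorem for infinite-type surfaces together with $\Mod(S)$-invariance to show that no subsurface with finitely many punctures can be a witness. The only cosmetic difference is packaging: the paper produces, via the same change-of-coordinates reasoning you outline, an infinite family of pairwise disjoint arcs in $\G(S)$ with pairwise distinct endpoints, and deduces from this that any witness contains infinitely many punctures; you instead argue directly by contraposition, moving a single arc off a given finitely-punctured $Y$. Both arguments hinge on the same self-similarity of $\Ends(S)$ coming from the absence of isolated punctures, and the ``main obstacle'' you flag (matching the end partition induced by $a_0$) is exactly the content of the appeal to Richards' classification that the paper also makes without further detail.
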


See Section \ref{sec:arcproofs} for some further consequences of Theorem \ref{thm:mainhyp}.

\subsubsection{Curve graphs} In the light of Corollary \ref{cor:cantor}, there are no geometrically interesting $\Mod(S)$-invariant subgraphs of $\Ac(S)$ if $S$ is a punctured surface with no  isolated punctures. With this motivation we are going to study  $\Mod(S)$-invariant subgraphs of the curve graph $\Cc(S)$ instead.  We will restrict our attention to the case when $S$ has no isolated ends and the space of ends consists solely of planar (resp. non-planar) ends; as we will see, the situation is completely different in these two cases. Before going any further, we note that the case when $S$ has isolated ends is covered in the recent preprint \cite{DFV}; see Remark \ref{rem:dfv} below.

Before we state our results, we denote by $\Nonsep(S)$ the {\em non-separating curve graph} of $S$, namely the subgraph of $\Cc(S)$ spanned by all non-separating curves. Further, let $\Nonsep^*(S)$ be the {\em augmented} nonseparating curve graph of $S$, whose vertices are all nonseparating curves on $S$ together with those curves that cut off a disk containing every puncture of $S$. Finally, denote by $\Outer(S)$ the subgraph of $\Cc(S)$ spanned by all the {\em outer curves} on $S$, namely those curves which cut off a disk containing some, but not all, punctures of $S$. See Section \ref{sec:defcurves} for further definitions.

We start with the case when the genus of $S$ is finite: 

\begin{theorem}
Let $S$ a connected orientable surface of infinite type, with finite genus and no isolated punctures. Then,  a $\Mod(S)$-invariant subgraph $\G(S)\subset \Cc(S)$ has infinite diameter if and only if $\G(S) \cap \Outer(S)  = \emptyset$. Moreover, in this case: 
\begin{enumerate}
\item If  $\G(S) \cap \Nonsep(S)  = \emptyset$ then $\G(S)$ is not hyperbolic.
\item If $\G(S) \cap \Nonsep(S)  \ne  \emptyset$ then $\G(S)$ is quasi-isometric to $\Nonsep(S)$ or $\Nonsep^*(S)$. 
\end{enumerate} 
\label{thm:nonsep}
\end{theorem}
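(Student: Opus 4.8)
The plan is to reduce everything to an analysis of the $\Mod(S)$-orbits of vertices. Since $S$ has finite genus and no isolated punctures, all of its ends are planar and non-isolated, so its space of ends is a Cantor set. By the change of coordinates principle, two curves are $\Mod(S)$-equivalent exactly when their genus splittings and the induced clopen partitions of the end space agree up to homeomorphism; the key topological input, used throughout, is that \emph{every} nonempty proper clopen set of ends is itself a Cantor set. Consequently there is a single orbit of non-separating curves (when $g\ge 1$), a single orbit of outer curves, a single orbit of curves cutting off a disk containing every end (call these the \emph{augmenting} curves, present only when $g\ge1$), and the remaining orbits consist of other separating curves (necessarily carrying genus on at least one side). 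As $\G(S)$ is $\Mod(S)$-invariant and spanned by its vertices, it is a union of such orbits, and the whole theorem becomes a matter of deciding which orbits occur.

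Next I would establish the diameter dichotomy. If $\G(S)$ contains an outer curve then, by invariance, $\Outer(S)\subseteq \G(S)$, and using that the ends accumulate one checks two facts: every vertex of $\G(S)$ is disjoint from some outer curve (push out to an end-neighbourhood of the complement of a compact set containing the given curve), and any two outer curves admit a common disjoint outer curve. Since disjoint vertices are adjacent in $\G(S)$, these two facts bound $\diam \G(S)$ by a universal constant. For the converse, that $\G(S)\cap\Outer(S)=\emptyset$ forces $\diam\G(S)=\infty$, I would appeal to the quasi-isometric description obtained below, reducing it to the assertion that $\Nonsep(S)$ and $\Nonsep^*(S)$ have infinite diameter. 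This last point follows by subsurface projection to a compact genus-$g$ witness $Y$ (with planar, all-ends complement): every non-separating curve crosses $Y$, the graph $\Nonsep(Y)$ is quasi-isometric to $\Cc(Y)$ and hence of infinite diameter, and the augmenting curves do not collapse this because a curve wrapping a clopen end-set cannot be enclosed in such a $Y$ (a homological obstruction).

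The heart of the argument is the quasi-isometry statement, under the hypothesis $\G(S)\cap\Nonsep(S)\ne\emptyset$, whence $\Nonsep(S)\subseteq\G(S)$. The plan is to construct a coarse Lipschitz retraction from $\G(S)$ onto $\Nonsep(S)$, or onto $\Nonsep^*(S)$, by sending each separating, non-outer vertex $c$ to a nearby non-separating (resp. augmenting) curve: since $c$ is not outer and $g\ge1$, some complementary component of $c$ carries genus, so $c$ is disjoint from a non-separating curve and $d_{\G(S)}(c,\Nonsep(S))\le 1$. The dichotomy is then governed by whether $\G(S)$ contains an augmenting curve: if it does, invariance gives $\Nonsep^*(S)\subseteq\G(S)$, and the augmenting curves produce genuine shortcuts only among non-separating curves lying in a common compact genus-$g$ subsurface with all ends outside, shortcuts which (again by the wrapping obstruction) do not collapse the diameter; if it does not, no such shortcuts exist and the target is $\Nonsep(S)$. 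In both cases the retraction is coarsely distance non-increasing. The hard part will be the reverse inequality: a surgery/fellow-travelling estimate showing that whenever two non-outer vertices are disjoint their chosen representatives lie a uniformly bounded distance apart in the target graph, so that any edge-path in $\G(S)$ pushes to a path of comparable length. This is precisely where one must rule out that the discarded separating curves furnish shortcuts absent from $\Nonsep(S)$ (resp. $\Nonsep^*(S)$), and it is the main obstacle of the proof.

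Finally, for part (1), when $\G(S)\cap\Nonsep(S)=\emptyset$ every vertex is a non-outer separating curve (for $g=0$ this case is in fact vacuous, since there every essential curve is outer). I would prove non-hyperbolicity by exhibiting a quasi-isometrically embedded flat: using that the end space is a Cantor set and that all vertices carry genus on some side, I would locate two disjoint finite-type witnesses $Y_1,Y_2$ of $\G(S)$ and take the two independent directions given by their subsurface projections, producing a quasi-isometric copy of $\Z\times\Z$ inside $\G(S)$ (which simultaneously supplies the infinite diameter needed in this case). This is a manifestation of Schleimer's disjoint witnesses principle \cite{Schleimer,MS}, exactly as in part (2b) of Theorem \ref{thm:mainhyp}; the content to verify is that the orbit classification of the first paragraph does furnish two such disjoint witnesses, both crossed by every vertex of $\G(S)$ and with coarsely surjective projections.
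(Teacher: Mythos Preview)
Your overall plan matches the paper's structure closely: orbit analysis, the finite-diameter direction via disjoint outer curves, disjoint witnesses for part (1), and a comparison with $\Nonsep(S)$ or $\Nonsep^*(S)$ for part (2). The one genuine gap is precisely the step you flag as ``the main obstacle'': you set up a vertex-by-vertex retraction $r:\G(S)\to\Nonsep(S)$ and then need the reverse Lipschitz inequality, but you do not supply it, and for an arbitrary choice of $r$ it is not clear that disjoint separating curves map to uniformly close non-separating curves.

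The paper avoids this difficulty entirely by working with geodesics rather than with a retraction on vertices. Given $\alpha,\beta\in\Nonsep(S)$ and a geodesic $\sigma$ in $\G(S)$ between them, pick any separating vertex $\gamma\in\sigma$ and write $S\setminus\gamma=Y\cup Z$, both of positive genus (Case I). The key observation is that the two neighbours $\gamma_L,\gamma_R$ of $\gamma$ on $\sigma$ must lie in the \emph{same} complementary component, say $Y$: otherwise $\gamma_L$ and $\gamma_R$ would be disjoint, contradicting that $\sigma$ is geodesic. Then any non-separating curve $\gamma'\subset Z$ is disjoint from both $\gamma_L$ and $\gamma_R$, so replacing $\gamma$ by $\gamma'$ yields a path of the same length with strictly fewer separating vertices. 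Iterating produces a geodesic entirely in $\Nonsep(S)$ of the same length as $\sigma$, so the inclusion $\Nonsep(S)\hookrightarrow\G(S)$ is in fact isometric on $\Nonsep(S)$; together with the observation (which you also make) that every vertex of $\G(S)$ is at distance $\le 1$ from $\Nonsep(S)$, this gives the quasi-isometry. Case II (some vertex bounds a disk containing every puncture) is handled by the identical argument with $\Nonsep^*(S)$ in place of $\Nonsep(S)$.

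For part (1) your plan is correct, but you leave the construction of the two disjoint witnesses unspecified. The paper does this concretely: take a multicurve $M$ of $g+1$ non-separating curves such that $S\setminus M=W_1\sqcup W_2$ with each $W_i$ of genus $0$ and containing at least one puncture of $S$. Every vertex of $\G(S)$ either separates $S$ into two positive-genus pieces or bounds a disk containing every puncture; in either case it must cross both $W_1$ and $W_2$, so these are disjoint witnesses, and subsurface projection to $\Ac(W_1,P_1)\times\Ac(W_2,P_2)$ (where $P_i$ is the set of boundary punctures coming from $M$) gives the embedded $\Z^2$.
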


\begin{remark}The classification of infinite-type surfaces (see Theorem \ref{thm:class} in Section \ref{sec:ends}) tells us that, under the hypotheses of the theorem, $S$ is homeomorphic to a closed surface with a Cantor set removed.
\end{remark}

As an immediate consequence of Theorem \ref{thm:nonsep}, we get that if $S$ has genus 0 then any connected, $\Mod(S)$-invariant subgraph of $\Cc(S)$ has finite diameter; compare with Corollary \ref{cor:cantor} above.

In the light of Theorem \ref{thm:nonsep}, a natural problem is the following; compare with Question \ref{qn:q1} below:  

\begin{question}
For $S$ as in Theorem \ref{thm:nonsep}, is $\Nonsep(S)$ (resp. $\Nonsep^*(S)$) hyperbolic? 
\end{question}

As we will see in Proposition \ref{prop:nonsepwitnesses} below, the answer to this question is positive if and only if $\Nonsep(S)$ (resp. $\Nonsep^*(S_{g,n})$) is hyperbolic {\em uniformly} in $n$; compare with part (2a) of Theorem \ref{thm:mainhyp} above. We remark that $\Nonsep(S_{g,n})$ is known to be hyperbolic by the work of Masur-Schleimer \cite{MS} and Hamensd\"adt \cite{Ham}, although the hyperbolicity constant may well depend on $S$. Similary,  $\Nonsep^*(S_{g,n})$ is conjecturally hyperbolic by the work of Masur-Schleimer \cite{MS}, since every two of its witnesses intersect, see Example \ref{example} in Section \ref{sec:defcurves}; on the other hand, even if this were the case, the hyperbolicity constant may well depend on $S$, again.

\medskip

Finally, we deal with the infinite genus surface with a Cantor space of ends, all of them non-planar. We remark that this surface is unique up to homeomorphism, and is usually called the {\em blooming Cantor tree}: 

\begin{theorem}
Let $S$ be a connected orientable surface of infinite genus and no isolated ends. Suppose further that every end of $S$ is non-planar. If $\G(S)$ is a $\Mod(S)$-invariant subgraph of $\Cc(S)$, then $\diam(\G(S)) = 2$. 
\label{thm:curvediam2}
\end{theorem}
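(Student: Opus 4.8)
The plan is to show that any two vertices of $\G(S)$ are at distance at most $2$, and that this bound is realized, by exploiting the infinite genus to find, for any given pair of curves, a common ``neighbour'' in the graph. The key structural feature here is that $S$ has infinite genus and no isolated ends, so by the classification of surfaces $S$ is homeomorphic to an infinite-genus surface whose end space carries no isolated points. The crucial consequence I would extract is that the complement of any compact subsurface still has infinite genus, which gives us an enormous supply of nonseparating curves disjoint from any fixed finite configuration.

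First I would fix two arbitrary vertices $\alpha, \beta \in \G(S)$. Since $\G(S)$ is $\Mod(S)$-invariant, every curve in the $\Mod(S)$-orbit of $\alpha$ is also a vertex of $\G(S)$; the same holds for $\beta$. The strategy is to produce a curve $\gamma \in \G(S)$ that is disjoint from both $\alpha$ and $\beta$, which immediately yields $d_{\G(S)}(\alpha,\beta)\le 2$. To build $\gamma$, I would take a compact subsurface $K$ containing representatives of $\alpha$ and $\beta$ in minimal position; because $S$ has infinite genus, $S \setminus K$ contains a subsurface of positive (indeed infinite) genus, and hence a nonseparating curve $\gamma$ disjoint from $\alpha \cup \beta$. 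The remaining point is to guarantee $\gamma \in \G(S)$: since $\gamma$ and $\alpha$ are of the same topological type can fail in general, I would instead argue by orbit invariance. Concretely, I expect one can choose a mapping class $f \in \Mod(S)$ supported away from a neighbourhood of $\gamma$ carrying $\alpha$ (or $\beta$) to some vertex whose presence in $\G(S)$ forces $\gamma$ into $\G(S)$; more cleanly, one shows every $\Mod(S)$-orbit of a vertex accumulates everywhere, so that $\G(S)$ must contain curves of the right topological type in the complement of $K$.

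The honest difficulty, and the step I would treat most carefully, is precisely this last one: verifying that the witness/orbit structure forces $\gamma$ to lie in $\G(S)$ rather than merely in $\Cc(S)$. The clean way to phrase it is via witnesses: I would argue that under the infinite-genus, no-isolated-ends hypothesis, no essential subsurface can be a witness of any nonempty $\Mod(S)$-invariant graph, because for any candidate witness $Y$ one can use the homogeneity of the end space and the infinite genus to find a mapping class pushing some vertex of $\G(S)$ off $Y$ entirely, contradicting the witness condition. Having no witnesses, combined with invariance, pins down which topological types of curves appear in $\G(S)$ and lets me realize the connecting curve $\gamma$ within the complement of $K$ as an actual vertex.

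Finally I would verify that the diameter is exactly $2$ and not less, i.e. that $\G(S)$ is not complete or of diameter $1$: one exhibits two vertices $\alpha, \beta$ that necessarily intersect, for instance by taking $\beta = f(\alpha)$ for a suitable $f \in \Mod(S)$ with $i(\alpha, f(\alpha)) > 0$, so that $d_{\G(S)}(\alpha,\beta) = 2$. This uses only that $\G(S)$ is nonempty and invariant, together with the fact that on an infinite-genus surface every curve can be moved to intersect itself nontrivially. Combining the upper bound $\le 2$ from the first part with this lower bound gives $\diam(\G(S)) = 2$. I expect the main obstacle throughout to be the membership question for $\gamma$, which is why I would route the entire argument through the nonexistence of witnesses, as that is the cleanest mechanism for controlling exactly which curves a sufficiently invariant graph must contain.
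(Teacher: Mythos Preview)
Your overall strategy matches the paper's: take the finite-type subsurface $F(\alpha,\beta)$ filled by the two curves, and find a vertex of $\G(S)$ in its complement. However, you treat as the ``main obstacle'' precisely the step the paper dispatches in one line. Rather than first producing a nonseparating curve $\gamma$ in $S\setminus K$ and then worrying about whether $\gamma\in\G(S)$ via a witness argument, the paper simply uses the classification of infinite-type surfaces (infinite genus, Cantor end space) to produce $h\in\Mod(S)$ with $h(\alpha)\subset S\setminus F(\alpha,\beta)$; then $h(\alpha)\in\G(S)$ automatically by $\Mod(S)$-invariance, and it is disjoint from both $\alpha$ and $\beta$. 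Your witness detour is correct in spirit --- it amounts to showing no proper finite-type subsurface is a witness, which one proves by exactly this orbit argument --- but it is an unnecessary repackaging: the connecting vertex can be taken in the $\Mod(S)$-orbit of $\alpha$ itself, so no question about which topological types of curves $\G(S)$ contains ever arises.

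Your verification that the diameter is exactly $2$ rather than $\le 1$ is a reasonable addition; the paper's proof establishes only the upper bound and leaves this implicit.
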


\begin{remark} If $S$ has a finite number $\ge 5$ of isolated ends, Durham-Fanoni-Vlamis \cite{DFV} have constructed $\Mod(S)$-invariant subgraphs of $\Cc(S)$ that are hyperbolic and have infinite diameter. 
\label{rem:dfv}
\end{remark}

The plan of the paper is as follows. Section \ref{sec:metricdefs} provides the necessary background on $\delta$-hyperbolic spaces and quasi-isometries. In Section \ref{sec:ends} we recall some facts about the space of ends of a surface. In Section \ref{sec:defcurves} we briefly introduce mapping class groups and  some of the combinatorial complexes one can associate to a surface. In Section \ref{sec:arcproofs} we prove Theorem \ref{thm:mainhyp} and discuss some of its consequences. Finally, Section \ref{sec:curveproofs} contains the proofs of Theorems \ref{thm:nonsep} and  \ref{thm:curvediam2}, together with some open questions.

\medskip

\noindent{\bf Acknowledgements.} This project stemmed out of discussions with Juliette Bavard, and the authors are indebted to her for sharing her ideas and enthusiasm. We want to thank LAISLA  and CONACYT's Red tem\'atica Matem\'aticas y Desarrollo for its support. 

This work started with a visit of the first named author to the UNAM (Morelia), and he would like to thank the Centro de Ciencias Matem\'aticas for its warm hospitality. 
He also thanks Brian Bowditch, Hugo Parlier, and Saul Schleimer for conversations.

The authors are grateful to Federica Fanoni and Nick Vlamis for discussions and for pointing out several errors in an earlier version of this draft.  In the previous version of this paper, Theorem \ref{thm:curvediam2} was not correct, as it omitted the hypothesis of all ends being non-planar; we thank Justin Lanier and Marissa Loving for indicating this to us. Finally, thanks to the referee for comments and suggestions that helped improve the exposition.


\section{Hyperbolic metric spaces}
\label{sec:metricdefs}
We briefly recall some notions on large-scale geometry that will be used in the sequel. For a thorough discussion, see \cite{GH}. 

\begin{definition}[Hyperbolic space]
Let $X$ be a geodesic metric space. We say that $X$ is {\em $\delta$-hyperbolic} if there exists $\delta \ge 0$ such that every triangle $T\subset X$ is {\em $\delta$-thin}: there exists
a point $c\in X$ at distance at most $\delta$ from  every side of $T$. 
\end{definition}

We will simply say that a geodesic metric space is {\em hyperbolic} if it is $\delta$-hyperbolic for some $\delta \ge 0$. 

\begin{definition}[Quasi-isometry]
Let $(X,d_X), (Y,d_Y)$ be two geodesic metric spaces. We say that a map $f:(X,d_X) \to (Y, d_Y)$ is a {\em quasi-isometric embedding} if there exist $\lambda \ge 1$ and $C\ge 0$ such that
\begin{equation}
\frac{1}{\lambda} d_X(x,x') - C \le d_Y(f(x),f(x')) \le \lambda d_X(x,x') + C,
\end{equation}
for all $x,x' \in X$. We say that $f$ is a {\em quasi-isometry} if, in addition to $(1)$, there exists $D\ge 0$ such that $Y$ is contained in the $D$-neighbourhood of $f(X)$. More concretely,  for all $y \in Y$ there exists  $x \in X$ with  $d_Y(y , f(x)) \le D$.
\end{definition}

We say that two  spaces are {\em quasi-isometric} if there exists a quasi-isometry between them. The following is well-known: 

\begin{proposition}
Suppose that two geodesic metric spaces $X,Y$  are quasi-isometric to each other. Then $X$ is hyperbolic if and only if $Y$ is hyperbolic. 
\end{proposition}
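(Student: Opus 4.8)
The plan is to use the symmetry of the hypothesis to reduce to a single implication. A quasi-isometry $f\colon X\to Y$ admits a quasi-inverse $g\colon Y\to X$ which is itself a quasi-isometry, and, after enlarging constants, we may assume $f$ and $g$ are both $(\lambda,C)$-quasi-isometric embeddings satisfying $d_X(g(f(x)),x)\le K$ and $d_Y(f(g(y)),y)\le K$ for all $x,x'$ and some fixed $K\ge 0$. It therefore suffices to show that if $Y$ is $\delta$-hyperbolic then $X$ is $\delta'$-hyperbolic for some $\delta'=\delta'(\delta,\lambda,C,K)$. The one genuinely nontrivial input I would invoke is the \emph{stability of quasi-geodesics} (the Morse Lemma) in the hyperbolic space $Y$: there is a constant $R=R(\delta,\lambda,C)$ so that every $(\lambda,C)$-quasi-geodesic of $Y$ lies within Hausdorff distance $R$ of any geodesic joining its endpoints.

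With this in hand the argument is a transport-and-pull-back. Let $T\subset X$ be a geodesic triangle with vertices $x_1,x_2,x_3$ and sides $\alpha,\beta,\gamma$. Since $f$ is a $(\lambda,C)$-quasi-isometric embedding, the images $f(\alpha),f(\beta),f(\gamma)$ are $(\lambda,C)$-quasi-geodesics of $Y$ joining the points $f(x_1),f(x_2),f(x_3)$ in pairs. Let $T'$ be the genuine geodesic triangle of $Y$ on these three vertices, with sides $\alpha',\beta',\gamma'$; here I use that $Y$ is geodesic, so such comparison sides exist. Because $Y$ is $\delta$-hyperbolic, there is a center $c'\in Y$ lying within distance $\delta$ of each of $\alpha',\beta',\gamma'$. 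By the Morse Lemma each of these geodesic sides is within Hausdorff distance $R$ of the corresponding quasi-geodesic, so $c'$ lies within $\delta+R$ of each of $f(\alpha),f(\beta),f(\gamma)$.

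Finally I would pull the center back to $X$. For each side pick a point on it, say $a\in\alpha$, with $d_Y(c',f(a))\le\delta+R$, and set $c:=g(c')\in X$. Using that $g$ is a quasi-isometric embedding together with the bound $d_X(g(f(a)),a)\le K$ gives
\[
d_X(c,a)\;\le\; d_X\bigl(g(c'),g(f(a))\bigr)+d_X\bigl(g(f(a)),a\bigr)\;\le\;\lambda\, d_Y(c',f(a))+C+K\;\le\;\lambda(\delta+R)+C+K,
\]
and the same estimate holds for the chosen points on $\beta$ and $\gamma$. Hence $c$ lies within $\delta':=\lambda(\delta+R)+C+K$ of all three sides of $T$, so $T$ is $\delta'$-thin with $\delta'$ independent of $T$; this is precisely the notion of hyperbolicity fixed in the definition above, so $X$ is hyperbolic. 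The main obstacle is the Morse Lemma itself, whose proof inside $Y$ is the substantive step; once it is available, everything else is bookkeeping with the quasi-isometry constants, and the matching of the comparison triangle $T'$ with $f(T)$ is exactly where geodesicity of both spaces is used.
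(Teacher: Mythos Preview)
Your argument is correct and is the standard proof via the Morse Lemma; this is exactly the approach found in the reference \cite{GH} that the paper cites. Note, however, that the paper itself does not supply a proof of this proposition: it is stated as ``well-known'' and left without argument, so there is nothing further to compare.
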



\section{The ends of a surface}
\label{sec:ends}

Let $S$ be a connected orientable surface, possibly of infinite topological type. We will briefly recall the definition of the {\em space of ends} of $S$, and refer the reader to \cite{Raymond} and  \cite{Richards} for a more thorough discussion on the space of ends of topological spaces and surfaces respectively.  

\begin{definition}[Exiting sequence]
An {\em exiting sequence} is a collection $U_1 \supseteq U_2 \supseteq \ldots$ of connected open subsets of $S$, such that:
\begin{enumerate}
\item $U_n$ is not relatively compact, for any $n$;
\item The boundary of $U_n$ is compact, for all $n$;
\item Any relatively compact subset of $S$ is disjoint from $U_n$, for all but finitely many $n$. 
\end{enumerate}
\end{definition}

We deem two exiting sequences to be {\em equivalent} if every element of the first sequence is contained in some element of the second, and vice-versa. An {\em end} of $S$ is defined as an equivalence class of exiting sequences, and we write $\Ends(S)$ for the set of ends of $S$. 

Given a subset $U \subset S$ with compact boundary, let $U^*$ be the set of all ends of $S$ that have a representative exiting sequence that is eventually contained in $U$. We put a topology on $\Ends(S)$ by declaring the set of all such $U^*$ to be a basis. 
The following theorem is a special case of Theorem 1.5 in \cite{Raymond}: 

\begin{theorem}
Let $S$ be a connected orientable surface. Then $\Ends(S)$ is totally disconnected, separable, and compact; in particular, it is a subset of a Cantor set. 
\label{thm:ends}
\end{theorem}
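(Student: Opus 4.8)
The plan is to verify the three asserted properties—total disconnectedness, separability, and compactness—directly from the definition of the end space as equivalence classes of exiting sequences, and then to deduce the final clause (that $\Ends(S)$ embeds in a Cantor set) from these three properties via a standard characterization. The key structural fact I would exploit throughout is that $S$, being a connected orientable surface, admits an exhaustion $K_1 \subseteq K_2 \subseteq \cdots$ by compact subsurfaces with $S = \bigcup_n K_n$, where each $K_n$ has finitely many boundary components (this follows from second countability of $S$, which we may assume). With such an exhaustion fixed, each complementary region $S \setminus K_n$ has finitely many unbounded connected components, and the end space is recovered as the inverse limit $\Ends(S) = \varprojlim_n \pi_0^{\infty}(S \setminus K_n)$, where $\pi_0^{\infty}$ denotes the set of non-relatively-compact components. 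This identifies $\Ends(S)$ as an inverse limit of finite discrete sets.

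First I would establish the inverse-limit description precisely: a basic open set $U^*$ (for $U$ with compact boundary) corresponds, after refining $U$ to a union of components of some $S \setminus K_n$, to a cylinder set in the inverse system. This shows the topology on $\Ends(S)$ agrees with the inverse-limit topology. From here, \textbf{compactness} is immediate: $\Ends(S)$ is a closed subset of the product $\prod_n \pi_0^{\infty}(S \setminus K_n)$ of finite (hence compact) discrete spaces, so by Tychonoff it is compact. \textbf{Total disconnectedness} also follows, since an inverse limit of discrete finite spaces is totally disconnected: any two distinct ends are separated already at some finite stage $K_n$, so they lie in disjoint clopen cylinder sets, whence no connected set contains both. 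For \textbf{separability}, I would use that each $\pi_0^{\infty}(S \setminus K_n)$ is finite and that the exhaustion is countable; the cylinder sets form a countable basis, so $\Ends(S)$ is second countable and in particular separable (one picks one end from each nonempty cylinder at each level to produce a countable dense set).

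Finally, for the concluding clause I would invoke the classical topological characterization: a nonempty, compact, metrizable, totally disconnected space embeds in the Cantor set. Metrizability here comes for free, since I have already shown $\Ends(S)$ is compact, Hausdorff (being totally disconnected and, as an inverse limit of $T_1$ spaces, Hausdorff), and second countable, and the Urysohn metrization theorem then applies. Thus $\Ends(S)$ is a compact metrizable totally disconnected space, and such spaces are exactly the closed subsets of the Cantor set.

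The main obstacle I anticipate is not any single property in isolation but the bookkeeping needed to show that the topology defined via the basis $\{U^* : U \subseteq S,\ \partial U \text{ compact}\}$ genuinely coincides with the inverse-limit topology. The subtlety is that an arbitrary $U$ with compact boundary need not be a union of components of some $S \setminus K_n$; one must show that, after enlarging the compact set to swallow $\partial U$, the set $U^*$ is expressible as a (finite) union of cylinder sets, and conversely that every cylinder set arises from some such $U$. Once this translation is made carefully, all three properties and the final embedding follow from standard inverse-limit and point-set topology, and indeed the statement is flagged in the excerpt as a special case of Theorem 1.5 of \cite{Raymond}, so I would lean on that reference for the delicate verification and present the inverse-limit argument as the conceptual skeleton.
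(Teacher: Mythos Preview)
Your proposal is correct, but there is nothing to compare it against: the paper does not supply its own proof of this theorem. It simply records the statement as ``a special case of Theorem 1.5 in \cite{Raymond}'' and moves on. You yourself noticed this at the end of your write-up.

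For what it is worth, the argument you sketch---exhaust $S$ by compact subsurfaces, identify $\Ends(S)$ with the inverse limit of the finite sets of unbounded complementary components, and read off compactness, total disconnectedness, and second countability from the inverse-limit-of-finite-discrete-sets description---is exactly the standard route and is essentially what one finds in Raymond's paper. The only genuinely delicate step, as you correctly flag, is checking that the basis $\{U^*\}$ agrees with the inverse-limit topology; your plan to absorb $\partial U$ into some $K_n$ and rewrite $U^*$ as a finite union of cylinder sets is the right way to handle it. So the proposal is sound, and since the paper defers entirely to \cite{Raymond}, your sketch already goes further than the paper does.
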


We now proceed to describe the classification theorem for connected orientable surfaces of infinite type \cite{Richards}. Before this, we need some notation. Say that an end of $S$ is {\em planar} if it has a representative exiting sequence whose elements are eventually planar; otherwise it is said to be {\em non-planar}. We denote by $\Ends^p(S)$ and $\Ends^n(S)$, respectively, the subspaces of planar and non-planar ends of $S$. Clearly $\Ends(S)=\Ends^p(S)\sqcup\Ends^n(S)$. In \cite{Richards}, Richards proved: 

\begin{theorem}[\cite{Richards}]
Let $S_1$ and $S_2$ be two connected orientable surfaces. Then $S_1$ and $S_2$ are homeomorphic if and only if they have the same genus, and $\Ends^n(S_1)\subset\Ends(S_1)$ is homeomorphic to $\Ends^n(S_2)\subset\Ends(S_2)$ as nested topological spaces. That is, there exists a homeomorphism $h:\Ends(S_1)\to\Ends(S_2)$ whose restriction to $\Ends^n(S_1)$ defines a homeomorphism between $\Ends^n(S_1)$ and $\Ends^n(S_2)$.  
\label{thm:class}
\end{theorem}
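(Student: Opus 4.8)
The plan is to prove both implications, with the forward direction essentially formal and the converse carried out by a back-and-forth construction along compatible exhaustions.

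For the forward direction, a homeomorphism $f\colon S_1\to S_2$ induces a bijection $f_*\colon\Ends(S_1)\to\Ends(S_2)$, since the space of ends is defined purely in terms of the poset of non-relatively-compact open subsets with compact boundary appearing in the definition of an exiting sequence, and $f$ preserves all of this structure. One checks directly that $f_*$ is a homeomorphism for the end topology, and that it carries $\Ends^n(S_1)$ onto $\Ends^n(S_2)$, because planarity of an end is intrinsic: an end is planar exactly when it has a representative exiting sequence with eventually planar (genus $0$) elements, and $f$ sends planar subsurfaces to planar subsurfaces. Finally $f$ preserves genus. Hence the two invariants agree.

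The substance is the converse. First I would fix for each $S_i$ an exhaustion $S_i=\bigcup_n K_n^{(i)}$ by compact connected bordered subsurfaces with $K_n\subset\mathrm{int}\,K_{n+1}$, arranged (by absorbing any relatively compact complementary component) so that no complementary component of any $K_n$ is relatively compact. Then the complementary components of $K_n$ are in canonical bijection with a finite clopen partition of $\Ends(S_i)$, and these partitions refine as $n$ grows, recovering $\Ends(S_i)$ as the inverse limit. I would record the combinatorial data of the exhaustion: for each $n$, the genus of each cobordism piece $\overline{K_{n+1}\setminus K_n}$, and the branching pattern by which the boundary circles of $K_n$ split among those of $K_{n+1}$, which encodes the branching of the end tree. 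The classification of compact bordered surfaces lets one normalize each cobordism. I would then run the back-and-forth: using the given nested homeomorphism $h\colon\Ends(S_1)\to\Ends(S_2)$, pass to subsequences of the two exhaustions so that the clopen partitions correspond under $h$ at every level, and so that genus is placed compatibly—isolated handles landing in corresponding pieces, and, when the genus is infinite, handles accumulating exactly onto the clopen sets that shrink to $\Ends^n$. This last matching is precisely where the hypothesis that $h$ respects $\Ends^n$ enters. Building homeomorphisms $K_n^{(1)}\to K_n^{(2)}$ inductively, extending each over the next cobordism (possible because the normalized cobordisms have equal genus and the same boundary-branching pattern, and any orientation-preserving boundary identification of compact bordered orientable surfaces of matching genus extends inward), and passing to the direct limit produces the desired homeomorphism $S_1\to S_2$.

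The main obstacle is the combinatorial bookkeeping of the converse: choosing the two exhaustions compatibly so that the trees of complementary regions are isomorphic under $h$ and, crucially, so that genus is distributed over the correct complementary regions. The infinite-genus case is the delicate one, since one must guarantee that handles accumulate precisely at the non-planar ends on \emph{both} sides; this is controlled by $h$ respecting the nested subspace $\Ends^n(S_1)\subset\Ends(S_1)$, and forces a careful interleaving of the two exhaustions so that every handle appearing on one side is eventually matched by a handle in the $h$-corresponding complementary region on the other.
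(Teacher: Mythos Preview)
The paper does not prove this theorem; it is stated as Richards' classification theorem with a citation to \cite{Richards} and used as a black box throughout (for instance in the proofs of Corollary~\ref{cor:cantor}, Theorem~\ref{thm:nonsep}, and Theorem~\ref{thm:curvediam2}). There is therefore no proof in the paper to compare your proposal against.

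That said, your outline follows the standard strategy in Richards' original paper (itself refining Ker\'ekj\'art\'o): the forward direction is formal, and the converse is an exhaustion argument in which one builds a homeomorphism level by level over compatible compact exhaustions, using the classification of compact bordered surfaces at each stage and the pair homeomorphism $h$ to control where genus accumulates. Your identification of the main difficulty---arranging the two exhaustions so that the induced clopen partitions of the end spaces correspond under $h$ \emph{and} so that handles are allocated to the $h$-matching complementary regions---is exactly right. One point you leave implicit that Richards handles with some care: when passing to subsequences of the exhaustions to force the clopen partitions to match under $h$, one must also ensure that the cobordism pieces on the two sides have \emph{equal} genus (not merely genus accumulating in the right places), which requires an interleaving step where one alternately enlarges the exhaustion on each side to absorb the correct number of handles. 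Apart from this, your plan is a faithful sketch of the classical argument.
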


We remark that this theorem was later extended by  Prishlyak and Mischenko \cite{PM} to surfaces with non-empty boundary.


\section{Arcs, curves, and witnesses}
\label{sec:defcurves}
In this section we will introduce the necessary definitions about arcs and curves that appear in our results. Throughout, let $S$ be a connected, orientable surface of infinite topological type. Let $\Pi$ be a (possibly empty) set of marked points on $S$, which we feel free to regard as marked points, punctures, or (planar) ends of $S$.  

\subsection{Mapping class group} The mapping class group $\Mod(S)$ is the group of self-homeomorphisms of $S$ that preserve $\Pi$ setwise, up to isotopy preserving $\Pi$ setwise. Given a (possibly empty) finite subset  $P$ of $\Pi$, we define $\Mod(S,P)$ to be the subgroup of $\Mod(S)$ whose every element preserves $P$ setwise. Observe that $\Mod(S,\emptyset) = \Mod(S)$. 

\subsection{Arcs and curves} By a {\em curve} on $S$ we mean the isotopy class of a simple closed curve on $S$ that does not bound a disk with at most one puncture. An {\em arc} on $S$ is the isotopy class of a properly embedded, simple arc on $S$ with both endpoints in $\Pi$. 

Given $a,b\in \AC(S)$, we define their {\em intersection number} as \[i(a,b) = \min \{\bar{a} \cap \bar{b} | \bar{a}\in a, \bar{b}\in b\},\] and say that $a,b$ are {\em disjoint} if $i(a,b)=0$. 
Observe $i(a,b)$ is finite whenever at least one of $a,b$ is a curve, as curves are compact. on the other hand, the intersection number of two arcs could well be infinite; indeed, arcs have infinite length once we equip $S-\Pi$ with a complete hyperbolic structure. 

The {\em arc and curve graph} $\AC(S)$ of $S$ is the simplicial graph whose vertices are all arcs and curves on $S$, and where two vertices are adjacent in $\AC(S)$ if they have disjoint representatives on $S$. As is often the case, we turn $\AC(S)$ into a geodesic metric space by declaring the length of each edge to be 1.

Observe that $\Mod(S)$ acts on $\AC(S)$ by isometries. As mentioned in the introduction, we will concentrate in subgraphs of $\Ac(S)$ that are invariant under big subgroups of $\Mod(S)$. More concretely, we have the following definition:

\begin{definition}[Sufficient invariance]
We say that a subgraph $\G(S)$ of $\AC(S)$ is {\em sufficiently invariant} if there exists a (possibly empty) subset $P$ of $\Pi$ such that $\Mod(S,P)$ acts on $\G(S)$. 
\label{def:invariant}
\end{definition}

We will be interested in various standard $\Mod(S)$-invariant subgraphs of $\AC(S)$, whose definition we now recall.

The {\em arc graph} $\Ac(S)$ is the subgraph of $\AC(S)$ spanned by all vertices of $\AC(S)$ that correspond to arcs on $S$; note that $\Ac(S)=\emptyset$ if and only if $\Pi=\emptyset$. Observe that if $S$ has infinitely many punctures then $\Ac(S)$ has finite diameter. 

Similarly, the {\em curve graph} $\Cc(S)$ is the subgraph spanned by those vertices that correspond to curves on $S$. Note that $\Cc(S)$ has diameter 2 for every surface of infinite type.

The {\em nonseparating curve graph} $\Nonsep(S)$ is the subgraph of $\Cc(S)$ spanned by all nonseparating curves on $S$.
A related graph is the {\em augmented nonseparating curve graph} $\Nonsep^*(S)$, whose vertices are curves that either do not separate $S$, or else bound a disk containing every puncture of $S$. Note that these graphs have diameter 2 if $S$ has infinite genus.

 Finally, the {\em outer curve graph} $\Outer(S)$ is the subgraph of $\Cc(S)$ spanned by those curves $\alpha$ that bound a disk with punctures on $S$, and such that both components of $S-\alpha$ contain at least one puncture of $S$. Observe that $\Outer(S) = \emptyset$ if $S$ is closed or has exactly one puncture, and that $\Outer(S)$ has finite diameter if $S$ has infinitely many punctures. 
 
As the reader may suspect at this point, these observations constitute the main source of inspiration behind the  statements of Theorems \ref{thm:nonsep} and \ref{thm:curvediam2}.

\subsection{Witnesses} Let $S$ be a connected orientable surface of infinite type, and $\G(S)$ a connected subgraph of $\AC(S)$. As mentioned in the introduction, we will use the following notion, originally due to Schleimer \cite{Schleimer}:

\begin{definition}[Witness]
A {\em witness} of $\G(S)$ is an essential subsurface $Y\subset S$ such that every vertex of $\G(S)$ intersects $Y$ essentially. 
\end{definition}

Observe that if $Y$ is a witness of $\G(S)$ and $Z$ is a subsurface of $S$ such that $Y \subset Z$, then $Z$ is also a witness. 

\begin{example} For the sake of concreteness, let $S$ be a connected orientable surface of finite genus $g$, possibly with infinitely many punctures. 
\begin{enumerate}
\item If $\G(S) = \Ac(S)$, then $Y \subset S$ is a witness if and only if $Y$ contains every puncture of $S$. 
\item If $\G(S) = \Cc(S)$, then $Y\subset S$ is a witness if and only if $Y=S$. 
\item If $\G(S) = \Nonsep(S)$, then $Y\subset S$ is a witness if and only if $Y$ has genus $g$.
\item Let $\G(S) = \Nonsep^*(S)$, and suppose $S$ has at least two punctures so that $\Nonsep^*(S) \ne \Nonsep(S)$. 
Then $Y\subset S$ is a witness if and only if $Y$ has genus $g$ and  at least one puncture.
\end{enumerate}
\label{example}
\end{example}


\section{Subgraphs of the arc graph}
\label{sec:arcproofs}

In this section we give a proof of Theorem \ref{thm:mainhyp}. The main tool is the following variant of Masur-Minsky's {\em subsurface projections} \cite{MM2}:


\medskip

\noindent{\bf Subsurface projections.} Let $Y$ be a witness of $\G(S)$, and suppose $Y$ is not homeomorphic to an annulus. There is a natural projection $$\pi_Y:\G(S) \to \Ac(Y)$$ defined by setting $\pi_Y(v)$ to be any connected component of $v \cap Y$. In particular, $\pi_Y(v)=v$ for every $v \subset Y$; in other words, the restriction of $\pi_Y$ to $\G(Y)$ is the identity. Observe that the definition of $\pi_Y$ involves a choice, but any two such choices are disjoint and therefore at distance at most 1 in $\Ac(Y)$. The same argument gives: 

\begin{lemma}
Let $S$ be a surface and $Y$ an essential subsurface not homeomorphic to an annulus. If $u,v$ are disjoint arcs which intersect $Y$ essentially, then $\pi_Y(u)$ and $\pi_Y(v)$ are disjoint (possibly equal).
\label{lem:lip}
\end{lemma}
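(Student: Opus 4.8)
The plan is to show that the subsurface projection map $\pi_Y$ is coarsely Lipschitz by establishing the claim for adjacent vertices, which is exactly what Lemma \ref{lem:lip} asserts. First I would fix disjoint arcs $u,v$ that each intersect $Y$ essentially, and recall that $\pi_Y(u)$ is defined to be a connected component of $u \cap Y$, and similarly $\pi_Y(v)$ a component of $v \cap Y$. Since $u$ and $v$ have disjoint representatives on $S$, I would pass to such representatives at the outset; then $u \cap Y$ and $v \cap Y$ are disjoint subsets of $Y$.

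The key point is that $\pi_Y(u)$ and $\pi_Y(v)$ are, by definition, particular connected components of $u\cap Y$ and $v\cap Y$ respectively. Each such component is an arc in $Y$ with endpoints on $\partial Y$ or at punctures of $Y$, hence a legitimate vertex of $\Ac(Y)$. Because $u\cap Y$ and $v\cap Y$ are contained in the disjoint sets $u$ and $v$, any component of the former is disjoint from any component of the latter. In particular $\pi_Y(u)$ and $\pi_Y(v)$ are disjoint arcs in $Y$, so either they are equal (distance $0$) or they are adjacent vertices of $\Ac(Y)$ (distance $1$); in all cases their distance is at most $1$, which is the assertion.

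The one subtlety I would be careful about is the hypothesis that $Y$ is not homeomorphic to an annulus. This guarantees that the components of $u\cap Y$ and $v\cap Y$ are genuinely vertices of $\Ac(Y)$ in the sense defined above (essential arcs, not boundary-parallel nullities), so that disjointness translates into being at distance at most $1$ in the graph $\Ac(Y)$; in the annular case the relevant notion of projection and distance is different and must be handled separately, which is why it is excluded here. I would also note that since $u,v$ intersect $Y$ essentially, the intersections $u\cap Y$ and $v\cap Y$ are nonempty and contain at least one essential component, so $\pi_Y(u)$ and $\pi_Y(v)$ are well defined.

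I do not expect a serious obstacle in this argument: the statement is essentially immediate from the definition of $\pi_Y$ together with the observation that disjoint arcs on $S$ restrict to disjoint arcs on $Y$. The mildest care needed is to confirm that each chosen component is indeed an essential arc of $Y$ (using essential intersection and the non-annular hypothesis), after which disjointness gives adjacency in $\Ac(Y)$ directly.
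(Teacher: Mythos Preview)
Your argument is correct and matches the paper's approach exactly: the paper does not even write out a proof, merely noting that ``the same argument gives'' the lemma, referring to the preceding observation that any two components of an intersection with $Y$ are disjoint and hence at distance at most $1$ in $\Ac(Y)$. Your write-up simply spells this out with appropriate care about essentiality and the non-annular hypothesis.
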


 Recall that, given a graph $\G(S)\subset \Ac(S)$ and a witness $Y$, by $\G(Y)$ we mean the full subgraph of $\G(S)$ spanned by those vertices that are entirely contained in $Y$. Observe that Lemma \ref{lem:lip}, plus the discussion preceding it, implies that $\G(Y)$ is connected whenever $\G(S)$ is. 
 
   For technical reasons, which will become apparent in the proof of Lemma \ref{lem:qi} below, we will be interested in subgraphs of $\Ac(S)$ for which the subsurface projections defined above satisfy the property described in the following definition:

\begin{definition}[Projection property]
We say that a subgraph $\G(S) \subset \Ac(S)$ has the {\em projection property} if there are constants $\lambda \ge 1$ and $C,D \ge 0$ such that, for every finite-type witness $Y$ of $\G(S)$, the graphs $\pi_Y(\G(S))$ and $\G(Y)$ are quasi-isometric via a $(\lambda,C,D)$-quasi-isometry which restricts to the identity on $\G(Y)$.
\end{definition}

As mentioned in the introduction, we remark that deciding whether a given {\em explicit} subgraph of $\Ac(S)$ has the projection property is normally easy to check; see Remark \ref{rem:projection} below.

The following lemma, which is a small variation of Corollary 4.2 in \cite{AFP}, is the main ingredient in the proof of Theorem \ref{thm:mainhyp}. We note that this is the sole instance in which we will make use of the assumption that $\G(S)$ has the projection property. 

\begin{lemma}
Let $S$ be a surface of infinite type, and $\G(S) \subset \Ac(S)$ a connected subgraph  with the projection property. Then, for every finite-type witness $Y$ of $\G(S)$, the subgraph $\G(Y)$ is uniformly quasi-isometrically embedded in $\G(S)$. 
\label{lem:qi}
\end{lemma}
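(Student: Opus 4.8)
The plan is to show that the projection map $\pi_Y : \G(S) \to \Ac(Y)$, when restricted to $\G(S)$ and post-composed with the quasi-isometry furnished by the projection property, gives a coarse Lipschitz retraction of $\G(S)$ onto $\G(Y)$; and that a coarse Lipschitz retraction onto a subgraph forces that subgraph to be quasi-isometrically embedded, with constants controlled only by the projection and retraction constants. The key point is that a retraction $r : X \to A$ that is coarsely Lipschitz and restricts to the identity on $A$ automatically makes the inclusion $A \hookrightarrow X$ a quasi-isometric embedding: the inclusion is $1$-Lipschitz, so $d_X(u,v) \le d_A(u,v)$ for $u,v \in A$ (here $A = \G(Y)$ with its intrinsic metric); and in the other direction, if $u,v \in \G(Y)$ then $d_{\G(Y)}(u,v) = d_A(r(u),r(v)) \le \lambda\, d_X(u,v) + C$, since $r$ fixes $A$ pointwise and is $(\lambda,C)$-coarsely Lipschitz. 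This yields the two-sided bound with uniform constants.

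Concretely, I would first record that $\pi_Y$ is coarsely Lipschitz on $\G(S)$: by Lemma \ref{lem:lip}, adjacent (hence disjoint) vertices $u,v$ of $\G(S)$ have $\pi_Y(u), \pi_Y(v)$ at distance at most $1$ in $\Ac(Y)$, so $\pi_Y$ is $1$-Lipschitz as a map into $\Ac(Y)$. Next, since $\G(Y) \subset \Ac(Y)$ and the projection property provides a quasi-isometry $q : \pi_Y(\G(S)) \to \G(Y)$ with uniform constants $(\lambda_0, C_0)$ independent of $Y$, and this $q$ restricts to the identity on $\G(Y)$, I would set $r = q \circ \pi_Y : \G(S) \to \G(Y)$. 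This $r$ is coarsely Lipschitz (composition of a $1$-Lipschitz map with a $(\lambda_0,C_0)$-quasi-isometry, so $(\lambda_0, C_0)$-coarsely Lipschitz after absorbing constants), and because $\pi_Y$ is the identity on $\G(Y)$ (as $\pi_Y(v) = v$ for $v \subset Y$) and $q$ is the identity on $\G(Y)$, the composite $r$ is the identity on $\G(Y)$. Feeding this into the retraction argument above gives that $\G(Y)$ is $(\lambda_0, C_0)$-quasi-isometrically embedded in $\G(S)$, with constants depending only on the projection property and hence uniform over all finite-type witnesses $Y$.

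The only genuine subtlety I anticipate, and the step I would treat most carefully, is the interplay between the intrinsic path metric on $\G(Y)$ and the subspace metric it inherits from $\G(S)$. The inequality $d_{\G(S)}(u,v) \le d_{\G(Y)}(u,v)$ needs the inclusion $\G(Y) \hookrightarrow \G(S)$ to be $1$-Lipschitz, which holds because any edge-path in $\G(Y)$ is in particular an edge-path in $\G(S)$. Establishing this cleanly (and confirming that $\G(Y)$ is itself connected, so that $d_{\G(Y)}$ is finite and the statement is non-vacuous) is where I would be most attentive; one should note that connectivity of $\G(Y)$ is implicit in the projection property, since it asserts a quasi-isometry with $\pi_Y(\G(S))$. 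Once the metric comparison is pinned down, the remaining estimates are formal manipulations with the quasi-isometry constants, and uniformity over $Y$ is inherited directly from the ``constants do not depend on $Y$'' clause in the definition of the projection property.
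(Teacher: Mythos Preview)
Your proof is correct and follows essentially the same route as the paper's: both use that the inclusion $\G(Y)\hookrightarrow\G(S)$ is $1$-Lipschitz, that $\pi_Y$ is $1$-Lipschitz by Lemma~\ref{lem:lip}, and that the projection property supplies a uniform quasi-isometry $\pi_Y(\G(S))\to\G(Y)$ fixing $\G(Y)$, which combine to give the reverse coarse inequality. The only cosmetic point is that your $1$-Lipschitz bound for $\pi_Y$ should be stated into $\pi_Y(\G(S))$ with its intrinsic metric (not just $\Ac(Y)$), since that is the domain of $q$; this holds for the same reason, as disjoint arcs remain adjacent in the full subgraph $\pi_Y(\G(S))$.
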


\begin{proof}
 Let $u,v$ be arbitrary vertices of $\G(Y)$. First, observe that since $\G(Y) \subset \G(S)$, we have $$d_{\G(S)}(u,v) \le d_{\G(Y)}(u,v).$$ To show a reverse coarse inequality, we proceed as follows. Consider a geodesic $\gamma \subset \G(S)$ between $u$ and $v$. The projected  path $\pi_Y(\gamma)$ is a path in $\pi_Y(\G(S))$ between $u= \pi_Y(u)$ and $v=\pi_Y(v)$, and $${\rm length}_{\pi_Y(\G(S))}(\pi_Y(\gamma)) \le {\rm length}_{\G(S)}(\gamma),$$ by Lemma \ref{lem:lip}.
In particular, $$d_{\pi_Y(\G(S))}(u,v) \le d_{\G(S)}(u,v).$$ Since $\G(S)$ has the projection property, there exist constants $L \ge 1$ and $C \ge 0$ (which depend only on $S$) such that 
 $$d_{\G(Y)}(u,v) \le L \cdot d_{\pi_Y(\G(S))}(u,v) + C,$$ and thus the result follows by combining the above two inequalities.
\end{proof}
%

We are now ready to prove Theorem \ref{thm:mainhyp}. 

\begin{proof}[Proof of Theorem \ref{thm:mainhyp}]
 Let $S$ be a connected, orientable surface of infinite  type, and denote by $\Pi$ the set of marked points of $S$. Let $\G(S)$ be a connected subgraph of $\Ac(S)$ with the projection property, and invariant under $\Mod(S, P)$ for some $P \subset \Pi$ finite (possibly empty). 
 
 
 We first prove part (1); in fact, we will show that the diameter of $\G(S)$ is at most 4. Let $u,v$ be two arbitrary distinct vertices of $\G(S)$. We first claim that there exists $w\in \G(S)$ that intersects both $u$ and $v$ a finite number of times. To see this, observe that if $u$ and $v$ have no endpoints in common, then their intersection number is finite and thus we may take $w=u$. Suppose now that $u$ and $v$ share two distinct endpoints $p, p' \in \Pi$. Then there exists an element $h$ in the subgroup of $\Mod(S,P)$ whose every element fixes $p$ and $p'$, such that $w=h(u)$ intersects both $u$ and $v$ a finite number of times, as desired. The rest of cases are dealt with in a similar fashion. This finishes the proof of the claim.

Continuing with the proof, we now claim that there is a vertex $z\in \G(S)$ that  is disjoint from $v$ and $w$. Indeed, consider the surface $F(v,w)$ filled by $v$ and $w$, which has finite type since $v$ and $w$ intersect finitely many times.  Since every witness of $\G(S)$ has infinitely many punctures, by assumption, we deduce that $F(v,w)$ is not a witness, and therefore there exists a vertex $z \in \G(S)$ that does not intersect $F(v,w)$. Using the same reasoning, there exists a vertex $z' \in \G(S)$ that is disjoint from $u$ and $w$. Thus, $$u \to z' \to w \to z \to v$$ is a path of length at most 4 in $\G(S)$ between $u$ and $v$, as desired.  

 \medskip
 
We now proceed to prove part (2), arguing along similar lines to \cite{AFP}. To show that $\G(S)$ has infinite diameter we proceed as follows. By assumption, there exists a witness $Y$ of $\G(S)$ with finitely many punctures. After replacing $Y$ by a finite-type surface containing every puncture of $S$, we may assume that $Y$ has finite type and $\Mod(Y)$ contains a pseudo-Anosov. Essentially by Luo's argument proving that the curve graph of a finite-type surface has infinite diameter (see the comment after Proposition 3.6 of \cite{MM1}), we deduce that $\G(Y)$ has infinite diameter.  Since $\G(Y)$ is quasi-isometrically embedded in $\G(S)$, by Lemma \ref{lem:qi}, it follows that $\G(S)$ has infinite diameter, as desired. 

Next, we establish part (2a) Assume that every two witnesses of $\G(S)$ intersect, and suppose first that there exists $\delta= \delta(S)$ such that $\G(Y)$ is $\delta$-hyperbolic, for every finite-type witness $Y$. We will prove that $\G(S)$ is $\delta$-hyperbolic. To this end, consider  a geodesic triangle $T \subset \G(S)$, and let $Z$ be a witness of $\G(S)$ containing every vertex of $T$, so that $T$ may be viewed as a triangle in $\G(Z)$. First, if $Z$ has infinitely many punctures then $\G(Z)$ has diameter $\le 4$, by the proof of part (1). Therefore $T$ has a $4$-center in $\G(Z)$, and thus also in $\G(S)$, as desired. Assume now that $Z$ has finitely many punctures; in this case, again up to replacing $Z$ by a connected, finite-type surface containing every puncture of $Z$, we may in fact assume that $ Z$ is connected and has finite type. 
 Since $\G(Z)$ is $\delta$-hyperbolic, by assumption, $T$ has a $\delta$-center in $G(Z)$, and thus also in $\G(S)$. Since $T$ is arbitrary and uniformly thin, we obtain that $\G(S)$ is hyperbolic, as claimed.

Using a very similar argument to the one just given, we also deduce that the hyperbolicity of $\G(S)$ implies that of $\G(Y)$, for every finite-type witness $Y$ of $\G(S)$. This finishes the proof of part (2a).

It remains to show part (2b). Assume that  $\G(S)$ has two disjoint witnesses $Y,Z\subset S$, each of finite type. As remarked above, after enlarging $Y$ and/or $Z$ if necessary we may assume that $\G(Y)$ and $\G(Z)$ have infinite  diameter.  
Since $Y$ and $Z$ are witnesses, the projection maps $\pi_Y$ and $\pi_Z$ are well-defined. Therefore there is a projection map $$\pi: \G(S) \to \Ac(Y) \times \Ac(Z)$$ which is simply the map $\pi_Y  \times \pi_Z$. Using this projection and the same arguments as in the proof of Lemma \ref{lem:qi}, the fact that $\G(S)$ has the projection property implies that $\G(S)$ contains a quasi-isometrically embedded copy of $\G(Y) \times \G(Z)$. By choosing a bi-infinite quasi-geodesic in $\G(Y)$ and in $\G(Z)$, we obtain $\G(S)$ contains a quasi-isometrically embedded copy of $\mathbb{Z}^2$, as claimed. 
This finishes the proof of part (2b), and hence of Theorem \ref{thm:mainhyp}. 
\end{proof}

\begin{remark}
As mentioned in the introduction, the proof of part (1) of Theorem \ref{thm:mainhyp} does not use that $\G(S)$ has the projection property; this will be crucial for Corollaries \ref{cor:cantor} and \ref{cor:Pnonisolated} below.
\label{rem:outofjailfree} 
\end{remark}

\subsection{Consequences} We proceed to discuss some of the consequences of Theorem \ref{thm:mainhyp} mentioned in the introduction, starting with Corollary \ref{cor:afp}. Before doing so, we need some definitions from \cite{AFP}. Let $\Pi$ be the set of marked points of $S$, where we assume that $\Pi\ne \emptyset$. As always, we will feel free to view the elements of $\Pi$ as marked points, punctures, or (planar)  ends of $S$. 

 We say that a marked point $p \in \Pi$ is {\em isolated} if it is isolated in $\Pi$, where the latter is equipped with the subspace topology (here we are viewing $\Pi$ as a set of marked points on $S$). Let $P \subset \Pi$ be a non-empty finite subset of marked points on $S$. Define $\Ac(S,P)$ as the subgraph of $\Ac(S)$ spanned by those arcs that have at least one endpoint in $P$. Note that $\Mod(S,P)$ acts on $\Ac(S,P)$, and hence $\Ac(S,P)$ is sufficiently invariant. 

\begin{remark}
The graphs $\Ac(S,P)$ have the projection property: if $Y$ is a finite-type witness of $S$ then $\pi_Y(\Ac(S,P))$ is uniformly quasi-isometric to $\Ac(Y, P)$, which is $\G(Y)$ for $\G(S) = \Ac(S,P)$. The proof that both graphs are quasi-isometric boils down to the fact that, for $v \in \Ac(S,P)$, there is at least one component of $v \cap Y$ that has an endpoint in $P$, which we can use to define a subsurface projection map with nice properties. 
\label{rem:projection}
\end{remark}

We are now in a position to prove Corollary \ref{cor:afp}: 

\begin{proof}[Proof of Corollary \ref{cor:afp}]
Since $P$ is finite and every puncture is isolated, there exists a witness containing only finitely many punctures (any finite-type surface containing $P$ will do). Now, part (2) of Theorem \ref{thm:mainhyp} applies with $\G(S) = \Ac(S,P)$, and thus $\Ac(S,P)$ has infinite diameter. Moreover, if $Y$ is a finite-type witness of $\Ac(S,P)$ then $\G(Y) = \Ac(Y,P)$, which  is $7$-hyperbolic by  \cite{HPW}. 
\end{proof}

We now prove Corollary \ref{cor:cantor}: 

\begin{proof}[Proof of Corollary \ref{cor:cantor}]
Let $S$ be as in the statement, and $\G(S)$ be a connected, $\Mod(S)$-invariant subgraph of $\Ac(S)$. As $S$ has no isolated punctures, using for instance the classification theorem for infinite-type surfaces \cite{Richards} we deduce that there exists an infinite sequence of distinct and pairwise disjoint vertices of $\G(S)$ such that every two distinct arcs in the sequence have no endpoints in common.  In particular, every witness of $\G(S)$ must have an infinite number of punctures, and so the result follows from part (1) of Theorem \ref{thm:mainhyp}. 
\end{proof}

In addition, we recover the following observation due to Bavard (stated as Proposition 3.5 of \cite{AFP}): 

\begin{corollary}
Suppose $P \subset \Pi$  contains a puncture that is not isolated. Then $\Ac(S,P)$ has finite diameter. 
\label{cor:Pnonisolated}
\end{corollary}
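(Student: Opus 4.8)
The plan is to apply part (1) of Theorem~\ref{thm:mainhyp} by showing that every witness of $\Ac(S,P)$ must contain infinitely many punctures. Recall from Remark~\ref{rem:outofjailfree} that part (1) does not require the projection property, so we may invoke it freely here. Thus the entire task reduces to a single geometric claim: if $p\in P$ is a puncture that is \emph{not} isolated in $\Pi$, then no essential subsurface $Y\subset S$ with finitely many punctures can be a witness of $\Ac(S,P)$.

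First I would unwind what it means for $Y$ to be a witness of $\Ac(S,P)$: every arc with at least one endpoint in $P$ must intersect $Y$ essentially. To contradict this for a finite-puncture $Y$, I want to produce a vertex of $\Ac(S,P)$ that can be pushed entirely off $Y$. The key resource is that $p$ is non-isolated, so there is a sequence of distinct punctures $p_1,p_2,\ldots\in\Pi$ converging to $p$ in the subspace topology on $\Pi$. Since $Y$ has only finitely many punctures, all but finitely many of the $p_i$ lie outside $Y$; in fact, because they accumulate at $p$, I can choose a small planar neighborhood of $p$ — say a half-open annular or disk region abutting the end $p$ — that is disjoint from $Y$ away from $p$ itself and contains infinitely many of the $p_i$.

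The main step is then to build an arc $w$ with an endpoint at $p$ (so $w\in\Ac(S,P)$, since $p\in P$) whose other endpoint is one of these nearby punctures $p_i$, chosen so that $w$ lies in the region near $p$ and is disjoint from $Y$. Concretely, one takes an arc from $p$ to $p_i$ contained in a topological disk (or planar subsurface) that meets $Y$ only in a neighborhood of $p$ on its frontier; by isotoping, $w$ intersects $Y$ trivially, i.e. $w\cap Y$ has no essential component. Such $w$ is a genuine arc (its endpoints are distinct marked points), it is a vertex of $\Ac(S,P)$, and it does not intersect $Y$ essentially — contradicting that $Y$ is a witness. Hence no finite-puncture subsurface is a witness, so every witness of $\Ac(S,P)$ has infinitely many punctures.

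Having established that every witness has infinitely many punctures, part (1) of Theorem~\ref{thm:mainhyp} (in the generality noted in Remark~\ref{rem:outofjailfree}) applies directly to $\G(S)=\Ac(S,P)$, yielding $\diam(\Ac(S,P))\le 4$, and in particular finite diameter. The step I expect to require the most care is the explicit construction of the off-$Y$ arc $w$: one must verify that the accumulation of punctures at $p$ genuinely allows an arc emanating from $p$ to terminate at a nearby puncture while avoiding $Y$ essentially, which hinges on describing a planar neighborhood of the end $p$ and checking that $Y$, having compact boundary and finitely many punctures, cannot "block" all such arcs. The topology of ends (Theorem~\ref{thm:ends} and the classification Theorem~\ref{thm:class}) guarantees the requisite sequence of punctures and the planar structure near $p$ needed to make this construction rigorous.
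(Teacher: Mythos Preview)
Your approach is correct and is exactly the one the paper intends: the paper does not spell out a proof of this corollary, but Remark~\ref{rem:outofjailfree} explicitly says that part (1) of Theorem~\ref{thm:mainhyp} is what makes Corollary~\ref{cor:Pnonisolated} work, and your argument carries this out.

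One small comment on the step you flag as delicate. The construction of the arc $w$ becomes immediate once you observe the following dichotomy for a putative witness $Y$ with finitely many punctures. Either $p$ is a puncture of $Y$, in which case $Y$ contains a punctured-disk neighborhood of $p$, and since $p$ is non-isolated this neighborhood already contains infinitely many punctures of $S$, all of which are then punctures of $Y$ --- contradicting the finiteness assumption. Or $p$ is \emph{not} a puncture of $Y$; then, because $Y$ has compact boundary, there is a genuine neighborhood $U$ of $p$ (in the surface with marked points filled in) disjoint from $Y$, and any arc in $U$ from $p$ to one of the accumulating punctures $p_i\in U$ is a vertex of $\Ac(S,P)$ missing $Y$ entirely. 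So there is no need to worry about an arc that meets $Y$ ``only near $p$'' and whether that intersection is essential: the first case is ruled out outright, and in the second case the arc is honestly disjoint from $Y$. With this phrasing your proof is complete and matches the paper's intended argument.
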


Finally, one could define $\Ac(S,P,Q)$ to be, for disjoint finite subsets $P,Q$ of isolated punctures, the subgraph of $\Ac(S)$ spanned by those arcs that have one endpoint in $P$ and the other in $Q$. In this situation we have the following result, also due to Bavard (unpublished): 

\begin{corollary}
The graph $\Ac(S,P,Q)$ is not hyperbolic. 
\end{corollary}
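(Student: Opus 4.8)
The plan is to deduce non-hyperbolicity from part (2b) of Theorem \ref{thm:mainhyp}, so the whole problem reduces to exhibiting two disjoint finite-type witnesses of $\Ac(S,P,Q)$. Before that I would dispatch the two structural hypotheses of the theorem. For invariance, note that $\Ac(S,P,Q)$ is preserved by the subgroup of $\Mod(S)$ that stabilizes $P$ and $Q$ setwise (equivalently, the subgroup of $\Mod(S,P\cup Q)$ respecting the partition into $P$ and $Q$); this is the relevant sufficiently-invariant group, since the proof of Theorem \ref{thm:mainhyp} only ever uses invariance to translate arcs and to supply mapping classes supported in finite-type witnesses, all of which this subgroup provides. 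For the projection property I would argue exactly as in Remark \ref{rem:projection}: given a finite-type witness $Y$ and a vertex $v$, a suitable component of $v\cap Y$ records the endpoints of $v$ in $P$ and $Q$, and one checks that the resulting projection $\pi_Y(\Ac(S,P,Q))$ is uniformly quasi-isometric to $\G(Y)=\Ac(Y,P,Q)$.

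The heart of the matter is the construction of the two disjoint witnesses. Recall that an essential subsurface $Y$ is a witness of $\Ac(S,P,Q)$ exactly when no arc from a point of $P$ to a point of $Q$ can be isotoped off $Y$; concretely, whenever $p\in P$ and $q\in Q$ both lie outside $Y$ they must lie in distinct complementary components of $Y$. In particular, for $\G(Y)$ to be nonempty, $Y$ must contain at least one point of $P$ and at least one point of $Q$. I would therefore split $P=\{p_1\}\sqcup P'$ and $Q=\{q_1\}\sqcup Q'$ and, invoking the classification of infinite-type surfaces (Theorem \ref{thm:class}) together with the fact that the points of $P\cup Q$ are isolated, build a finite-type subsurface $Y\subset S$ containing $p_1$ and $q_1$ and arranged so that in $S\setminus Y$ the leftover points $P'$ are separated from $Q'$; then $Y$ is a witness with $\G(Y)=\Ac(Y,\{p_1\},\{q_1\})\neq\emptyset$. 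Symmetrically I would produce a second witness $Z$, disjoint from $Y$, containing points $p_2\in P'$ and $q_2\in Q'$ and separating the remaining $P$-points from the remaining $Q$-points; the infinite-type topology of $S$ leaves ample room to keep $Y$ and $Z$ disjoint.

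With $Y$ and $Z$ in hand I would enlarge each, within the complement of the other, so that $\Mod(Y)$ and $\Mod(Z)$ contain pseudo-Anosovs; as in the proof of part (2) this forces $\G(Y)$ and $\G(Z)$ to have infinite diameter. Part (2b) of Theorem \ref{thm:mainhyp} then produces a quasi-isometrically embedded copy of $\mathbb{Z}^2$ inside $\Ac(S,P,Q)$, whence the graph is not hyperbolic. I expect the main obstacle to be precisely the construction of the two disjoint finite-type witnesses: one must simultaneously arrange that $Y$ and $Z$ are disjoint, that each meets both $P$ and $Q$ so that $\G(Y)$ and $\G(Z)$ are nonempty, and that each genuinely separates its leftover $P$-points from its leftover $Q$-points. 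This is where the assumption that $P$ and $Q$ contain enough isolated punctures, and that $S$ has infinite type, is used; once the witnesses are built, the remainder is a formal application of the machinery already in place.
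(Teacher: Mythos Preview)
Your overall plan---check sufficient invariance and the projection property, then exhibit two disjoint finite-type witnesses and invoke part (2b) of Theorem \ref{thm:mainhyp}---is exactly the paper's strategy. What you have missed is the one-line observation that makes the construction of witnesses immediate: every arc in $\Ac(S,P,Q)$ has one endpoint in $P$ and one in $Q$, so any essential subsurface containing all of $P$ is already a witness, and likewise for $Q$. Since $P$ and $Q$ are disjoint finite sets of isolated punctures, one simply takes a finite-type neighbourhood $Y$ of $P$ and a disjoint finite-type neighbourhood $Z$ of $Q$; these are the two disjoint finite-type witnesses, and (2b) applies. The paper's proof is literally this observation.

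Your more refined description of witnesses (in terms of separating leftover $P$-points from leftover $Q$-points in the complement) is correct---indeed more accurate than the paper's stated ``if and only if''---but it leads you to an unnecessarily delicate construction. By insisting that each witness contain a point of $P$ \emph{and} a point of $Q$ so that $\G(Y)$ is nonempty, you have manufactured the very ``main obstacle'' you anticipate, and as written your argument needs $|P|\ge 2$ and $|Q|\ge 2$ to produce the second witness $Z$. None of this is required: with $Y\supset P$ and $Z\supset Q$ the projections $\pi_Y$, $\pi_Z$ land in arc graphs relative to $P$ (resp.\ $Q$) and the boundary, and the usual product argument goes through. Your verification of sufficient invariance and of the projection property is fine and more careful than the paper's account.
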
 

\begin{proof}
Observe that $Y$ is a witness of  $\Ac(S,P,Q)$ if and only if it contains $P$ or $Q$. In particular, there are two disjoint witnesses of finite type, and part (2b) of Theorem \ref{thm:mainhyp} applies. 
\end{proof}


\section{Subgraphs of the curve graph}
\label{sec:curveproofs}

In this section we deal with connected, $\Mod(S)$-invariant subgraphs of the curve graph, proving Theorems \ref{thm:nonsep} and \ref{thm:curvediam2}. As mentioned in the introduction, we  restrict our attention to the case when $S$ has no isolated ends, which in turn implies that $\Ends(S)$ is homeomorphic to a Cantor set, by the classification theorem for infinite-type surfaces \cite{Richards} described in Section \ref{sec:ends}.

We first prove Theorem \ref{thm:nonsep}. The arguments we will use are similar in spirit to those used in the previous section, but adapted to this particular setting. 

\begin{proof}[Proof of Theorem \ref{thm:nonsep}]
Let $S$ be a connected, orientable surface of infinite type, with finite genus and no isolated ends. Let  $\G(S)$  be a connected, $\Mod(S)$-invariant subgraph of $\Cc(S)$. 

Suppose first that $\G(S) \cap \Outer(S) \ne  \emptyset$.  We want to conclude that $\diam(\G(S)) = 2$. To this end, let $\alpha$ and  $\beta$ be arbitrary vertices of $\G(S)$. If $\alpha$ and $\beta$ are disjoint, there is nothing to prove, so assume that $i(\alpha, \beta) \ne 0$. Let $F(\alpha,\beta)$ be the subsurface of $S$ filled by $\alpha$ and $\beta$, which has finite topological type since $\alpha$ and $\beta$ are compact. Therefore, there exists a connected component $Y$ of $S - F(\alpha, \beta)$ that has infinitely many punctures. Now, the fact that $\Ends(S)$ is a Cantor set and  the classification theorem for infinite-type surfaces, together imply that $\Mod(S)$ acts transitively on $\Outer(S)$. Thus 
there exists $h \in \Mod(S)$ and $\gamma \in \Outer(S)$  such that $h(\gamma) \subset Y$. In particular, $h(\gamma)$ is disjoint from both $\alpha$ and $\beta$ and hence $d_{\G(S)}(\alpha, \beta) =2$. 

Hence from now on, we assume that $\G(S) \cap \Outer(S) = \emptyset$. Suppose first that, in addition, $\G(S) \cap \Nonsep(S) = \emptyset$, and so every element of $\G(S)$ is a curve that either separates $S$ into two surfaces of positive genus, or cuts off a disk containing every puncture of $S$. We claim that $\G(S)$ has two disjoint witnesses, and thus  fails to be hyperbolic. To construct these witnesses, consider a multicurve $M$ consisting of $\rm{genus}(S)+1$ non-separating curves on $S$ such that $S- M= W_1 \sqcup W_2$, with $W_i$ a surface of genus 0 for $i=1,2$ and containing at least one puncture of $S$. By construction, $W_1$ and $W_2$ are witnesses for $\G(S)$. Let $P_i$ be the finite subset of punctures of $W_i$ coming from the elements of $M$. Using subsurface projections as in the previous section gives a quasi-isometric embedding $$\Ac(W_1, P_1) \times \Ac(W_2,P_2) \to \G(S),$$ thus obtaining a quasi-isometrically embedded copy of $\mathbb{Z}^2$ inside $\G(S)$. In particular, $\G(S)$ is not hyperbolic and has infinite diameter. 

Hence, from now on we assume that $\G(S) \cap \Nonsep(S) \ne \emptyset$, which in particular implies that $\Nonsep(S) \subset \G(S)$, since $\Mod(S)$ acts on $\G(S)$. There are two cases to consider: 

\medskip

\noindent{\bf Case I.} {\em No vertex of $\G(S)$ bounds a disk with punctures. }

\medskip

In this case,  we claim:

\medskip

\noindent{\bf Claim.} The inclusion map $\Nonsep(S) \hookrightarrow \G(S)$ is a quasi-isometry.

\begin{proof}[Proof of Claim]
We begin by showing that the inclusion map is a quasi-isometric embedding. In fact, more is true: we will prove that, given $\alpha,\beta\in \Nonsep(S)$ and  a geodesic $\sigma$ in $\G(S)$ between them, we can modify $\sigma$ to a geodesic $\sigma'$ in $\Nonsep(S)$ of the same length. (We remark that this argument is 
contained in the proof that the nonseparating curve complex is connected; see Theorem 4.4 of \cite{FM}.)
Let $\gamma\in \sigma$ be a curve in $\G(S) - \Nonsep(S)$.  By hypothesis, $S - \gamma = Y \cup Z$, where $Y$ and $Z$ both have positive genus. Let $\gamma_L$ and $\gamma_R$ be the vertices of $\sigma$ preceding (resp. following) $\gamma$. The assumption that $\sigma$ is  geodesic implies that either $\gamma_L,\gamma_R \subset Y$ or $\gamma_L,\gamma_R \subset Z$; suppose for the sake of concreteness that we are in the former case. Since $Z$ has positive genus, it contains a nonseparating curve $\gamma'$ which, by construction, is disjoint from $\gamma_L$ and $\gamma_R$. Replacing $\gamma$ by $\gamma'$ on $\sigma$ produces a geodesic in $\G(S)$ with a strictly smaller number of separating curves. 

At this point, we know that the inclusion map $\Nonsep(S) \hookrightarrow \G(S)$ is a (quasi-)isometric embedding. To see that it is a quasi-isometry, observe that every element of $\G(S)$ is at distance at most 1 from an element of $\Nonsep(S)$. This finishes the proof of the claim.
\end{proof}

\noindent{\bf Case II.} {\em There is a vertex of $\G(S)$ which bounds a disk with punctures. }

\medskip

Since $\G(S) \cap \Outer(S) = \emptyset$, we get an inclusion $\Nonsep^*(S) \subset \G(S)$. Using the same arguments as in the previous claim, we obtain: 

\medskip

\noindent{\bf Fact.} The inclusion map $\Nonsep^*(S) \hookrightarrow \G(S)$ is a quasi-isometry.

\medskip

In the light of the claims above, in order to finish the proof of the theorem it suffices to show: 

\medskip

\noindent{\bf Claim.} The graphs $\Nonsep(S)$ and $\Nonsep^*(S)$ have infinite diameter. 

\medskip

\noindent{\em Proof of Claim.} We prove the result for $\Nonsep(S)$, as the case of $\Nonsep^*(S)$  is totally analogous.

In a similar fashion to what we did in the previous section, we are going to prove that, for every finite-type witness $Y$, the subgraph $\Nonsep(Y)$ is quasi-isometrically embedded in $\Nonsep(S)$. Once this has been done, the claim will follow since $\Nonsep(Y)$ has infinite diameter, which again may be deduced using Luo's argument showing that the curve graph has infinite diameter; see Proposition 3.6 of \cite{MM1}.

In this direction, let $Y$ be a finite-type witness of $\Nonsep(S)$; in other words, $Y$ is a finite-type subsurface of $S$ of the same genus as $S$, see Example \ref{example} above. Let $\Ac(Y, \partial Y)$ be the subgraph of $\Ac(Y)$ spanned by those vertices that have both endpoints on $\partial Y$, where $\partial Y$ denotes the boundary of $Y$. 
Similarly, let $\mathcal{A}\Nonsep(Y)$ be the subgraph of $\mathcal{AC}(Y)$ spanned by the vertices of $\Nonsep(Y) \cup \Ac(Y, \partial Y)$. The inclusion map $$\Nonsep(Y)\hookrightarrow \mathcal{A}\Nonsep(Y)$$ is a quasi-isometry, where the constants do not depend on $Y$; to see this, one may use the standard argument to show that the embedding of $C(Y)$ into $\AC(Y)$ is a uniform quasi-isometry.
Now, as in the previous section, there is a subsurface projection $$\pi_Y: \Nonsep(S) \to \mathcal{A}\Nonsep(Y)$$ that associates, to an element of $\Nonsep(S)$, its intersection with $Y$. Using an analogous reasoning to that of Lemma \ref{lem:qi}, we obtain that $\Nonsep(Y)$ is uniformly quasi-isometrically embedded in $\Nonsep(S)$, as desired. This finishes the proof of the claim, and thus that of Theorem \ref{thm:nonsep}.
\end{proof}

The graphs $\Nonsep(S)$ and $\Nonsep^*(S)$ have an intriguing geometric structure. Indeed, using a small variation of the proof of Theorem \ref{thm:mainhyp}, we obtain: 

\begin{proposition}
Let $S$ be a connected surface of finite genus $g$ and with infinitely many punctures. Then $\Nonsep(S)$ (resp. $\Nonsep^*(S)$) is hyperbolic if and only if  $\Nonsep(S_{g,n})$ (resp. $\Nonsep^*(S_{g,n})$) is hyperbolic uniformly in $n$. 
\label{prop:nonsepwitnesses}
\end{proposition}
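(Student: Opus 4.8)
The plan is to run the proof of part~(2a) of Theorem~\ref{thm:mainhyp} with $\G(S)=\Nonsep(S)$ and with the role of the finite-type witnesses $\G(Y)$ played by the graphs $\Nonsep(S_{g,n})$. First I would record the structural facts that make the two settings parallel. By Example~\ref{example}, the finite-type witnesses of $\Nonsep(S)$ are exactly the finite-type subsurfaces of genus $g$, i.e. copies of $S_{g,n}$ (for $\Nonsep^*(S)$ one asks in addition for a puncture). Since $S$ has genus exactly $g$, and $g\ge 1$ (otherwise $\Nonsep(S)=\emptyset$), any two witnesses intersect: two disjoint genus-$g$ subsurfaces would force $\mathrm{genus}(S)\ge 2g>g$. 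Thus we are precisely in the regime of part~(2a), and the asserted equivalence is its exact analogue. I would also recall, from the final Claim in the proof of Theorem~\ref{thm:nonsep}, that $\Nonsep(Y)$ is \emph{uniformly} quasi-isometrically embedded in $\Nonsep(S)$ for every finite-type witness $Y$; this is the substitute for Lemma~\ref{lem:qi}.

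For the implication ``$\Nonsep(S)$ hyperbolic $\Rightarrow$ uniform hyperbolicity of the $\Nonsep(S_{g,n})$'', I would realise each $S_{g,n}$ as a finite-type witness $Y_n\subset S$, so that $\Nonsep(S_{g,n})\cong\Nonsep(Y_n)$ embeds as a $(\lambda,C)$-quasi-isometric embedding into $\Nonsep(S)$ with $\lambda,C$ independent of $n$. A geodesic triangle in $\Nonsep(S_{g,n})$ then maps to a uniform quasi-geodesic triangle in the hyperbolic space $\Nonsep(S)$; the Morse lemma bounds how far its sides stray from genuine geodesics, so the image triangle is uniformly thin, and pulling back through the quasi-isometric embedding shows the original triangle is uniformly thin. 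As every constant is independent of $n$, the $\Nonsep(S_{g,n})$ are hyperbolic uniformly in $n$.

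For the converse I would adapt the forward half of part~(2a). Given a geodesic triangle $T\subset\Nonsep(S)$, its finitely many vertices are compact curves, hence lie in a finite-type subsurface, which I enlarge to a witness of genus $g$. Here lies the one point absent from the arc-graph argument of Theorem~\ref{thm:mainhyp}, and I expect it to be the main obstacle: a curve nonseparating in $S$ may become separating inside a witness, so the vertices of $T$ need not belong to the \emph{intrinsic} graph $\Nonsep(Z)$. To repair this I would, for each vertex $\alpha$ of $T$, adjoin a compact curve meeting $\alpha$ exactly once (which exists since $\alpha$ is nonseparating in $S$) and enlarge to a finite-type witness $Z'$; being sandwiched between two genus-$g$ subsurfaces, $Z'$ again has genus $g$, and now every vertex of $T$ is nonseparating in $Z'$. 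Since curves nonseparating in $Z'$ remain nonseparating in $S$ (as $Z'$ carries the full genus), $\Nonsep(Z')$ is a subgraph of $\Nonsep(S)$, whence $d_{\Nonsep(Z')}\ge d_{\Nonsep(S)}$; as the sides of $T$ realise $\Nonsep(S)$-distance and lie in $\Nonsep(Z')$, they are geodesics of $\Nonsep(Z')$ as well. Thus $T$ is a bona fide geodesic triangle in $\Nonsep(Z')\cong\Nonsep(S_{g,m})$, which by hypothesis is $\delta$-hyperbolic with $\delta$ independent of $m$; a $\delta$-centre for $T$ in $\Nonsep(Z')$ is also a $\delta$-centre in $\Nonsep(S)$, so $T$ is $\delta$-thin. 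As $T$ is arbitrary and $\delta$ uniform, $\Nonsep(S)$ is $\delta$-hyperbolic. The statement for $\Nonsep^*(S)$ follows from the same template, using witnesses of genus $g$ with at least one puncture and keeping track of the distinguished curve that cuts off all punctures.
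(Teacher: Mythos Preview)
Your proof is correct and follows the same strategy as the paper's: for one implication, enclose a geodesic triangle of $\Nonsep(S)$ in a finite-type witness $Y\cong S_{g,n}$ and transfer a $\delta$-centre from $\Nonsep(Y)$ back to $\Nonsep(S)$; for the other, use the uniform quasi-isometric embedding $\Nonsep(Y)\hookrightarrow\Nonsep(S)$ established in the proof of Theorem~\ref{thm:nonsep}. You are in fact more careful than the paper on one point---arranging, via dual curves, that every vertex of $T$ remain nonseparating inside the chosen witness so that $T$ is genuinely a triangle in $\Nonsep(Z')$---which the paper's short proof glosses over.
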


In the light of Example \ref{example}, the finite-type witnesses of $\Nonsep(S)$ and $\Nonsep^*(S)$ are precisely the subsurfaces of the form $S_{g,n}$; compare with part (3) of Theorem \ref{thm:mainhyp}.

\begin{proof}[Proof of Proposition \ref{prop:nonsepwitnesses}] Again, we argue only for $\Nonsep(S)$, as the other case is very similar. 
Let $T$ be a geodesic triangle in $\Nonsep(S)$. Since $T$ has finitely many vertices and curves are compact, there exists a finite-type subsurface $Y$ of $S$ that contains every element of $T$. Thus we can view $T$ as a geodesic triangle in $\Nonsep(Y)$. If $\Nonsep(S_{g,n})$ is hyperbolic uniformly in $n$, there is $\delta = \delta(g)$ such that $T$ has a $\delta$-center $\alpha \in \Nonsep(Y)$ (with respect to the distance function in $\Nonsep(Y)$). In particular, $\alpha$ is at distance at most $\delta$ from the sides of $T$, where distance is measured in $\Nonsep(Y)$, and hence is a $\delta$-centre for $T$ in $\Nonsep(S)$. Thus, $\Nonsep(S)$ is $\delta$-hyperbolic. 

The other direction is analogous. 
\end{proof}

As mentioned in the introduction, it is known that $\Nonsep(S_{g,n})$ is hyperbolic \cite{Ham,MS}, but in principle the hyperbolicity constant may well depend on $n$. Similarly,  $\Nonsep^*(S_{g,n})$ is conjecturally hyperbolic by Masur-Schleimer's principle that every two witnesses intersect \cite{MS}, but even in this case the hyperbolicity constant could again depend on $n$. Thus we ask: 

\begin{question}
For fixed $g$, are $\Nonsep(S)$ and $\Nonsep^*(S_{g,n})$ hyperbolic uniformly in $n$? More generally, are they hyperbolic uniformly in both $g$ and $n$?
\label{qn:q1}
\end{question}

Finally, we prove Theorem \ref{thm:curvediam2}:

\begin{proof}[Proof of Theorem \ref{thm:curvediam2}]
Let $S$ be a connected orientable surface of infinite genus with no isolated or planar ends; as mentioned above, $S$ is unique up to homeomophism, and is normally referred to as the {\em  blooming Cantor tree}. Consider  a $\Mod(S)$-invariant subgraph  $\G(S)$ of $\Cc(S)$. Let $\alpha$ and $\beta$ be arbitrary vertices of $\G(S)$, noting again that the subsurface $F(\alpha,\beta)$ filled by them has finite type. Choose $h \in \Mod(S)$ such that $h(\alpha) \subset S - F(\alpha, \beta)$. Then  \[\alpha \to h(\alpha) \to \beta\] is  a path of length 2 in $\G(S)$ between $\alpha$ and $\beta$. 
\end{proof}

\bigskip

\noindent Departamento de Matem\'aticas, Universidad Aut\'onoma de Madrid \& \newline \noindent
Instituto de Ciencias Matem\'aticas, CSIC. 
\newline \noindent
\texttt{javier.aramayona@uam.es}

\bigskip

\noindent Centro de Ciencias Matem\'aticas, UNAM (Morelia). \newline \noindent
\texttt{ferran@matmor.unam.mx}


\begin{thebibliography}{99}

\bibitem{AFP} J. Aramayona, A. Fossas, H. Parlier. Arc and curve graphs for infinite-type surfaces. {\em Proceedings of the A.M.S}, in press. 

\bibitem{Bavard} J. Bavard, Hyperbolicit\'e du graphe des rayons et quasi-morphismes sur un gros groupe modulaire. {\em Geometry and Topology} 20 (2016).

\bibitem{Calegari} D. Calegari. {\em Big mapping class groups and dynamics.} Geometry and the imagination, {\url http://lamington.wordpress.com/2009/06/ 22/big-mapping-class-groups-and-dynamics/}, 2009

\bibitem{DFV} M. Durham, F. Fanoni, N. Vlamis, Graphs of curves on infinite-type surfaces with mapping class group actions. Preprint (2016). 

\bibitem{FM} B. Farb, D. Margalit, {\em A primer on mapping class groups}. Princeton Mathematical Series, 49.  

\bibitem{FP} A. Fossas, H. Parlier. Curve graphs on surfaces of infinite type. {\em   Ann. Acad. Sci. Fenn. Math.} 40 (2015).

\bibitem{FK1} L. Funar, C. Kapoudjian. An  Infinite  Genus  Mapping  Class  Group
and Stable Cohomology. {\em Commun. Math. Physics} 287 (2009). 

\bibitem{FK2} L. Funar, C. Kapoudjian, The braided Ptolemy-Thompson
group is finitely presented, {\em Geom. Topol.} 12 (2008).

\bibitem{GH}   E. Ghys, P. de la Harpe (editors), {\em Sur les groupes hyperboliques d'apr\`es Mikhael Gromov.}
Papers from the Swiss Seminar on Hyperbolic Groups held in Bern, 1988. Progress in Mathematics, 83. Birkhäuser Boston, Inc., Boston, MA, 1990.

\bibitem{Ham} U. Hamenst\"adt. Hyperbolicity of the graph of nonseparating multicurves. {\em Algebr. Geom. Topol.} 14 (2014).

\bibitem{HPW} S. Hensel, P. Przytycki, R. Webb, {\em 1-slim triangles and uniform hyperbolicity for arc graphs and curve graphs.} J. Eur. Math. Soc. (JEMS) 17 (2015)

\bibitem{MM1} H. A. Masur, Y. N. Minsky. Geometry of the complex of curves. I. Hyperbolicity. 
{\em Invent. Math.} 138 (1999).

\bibitem{MM2} H. A. Masur, Y. N. Minsky. Geometry of the complex of curves. II. Hierarchical structure. 
{\em Geom. Funct. Anal.} 10 (2000).

\bibitem{MS} H. Masur, S. Schleimer. The geometry of the disk complex, {\em Journal of the American Mathematical Society}, 26 (2013). 





\bibitem{PM} A. O. Prishlyak, K. I.  Mischenko.
Classification of noncompact surfaces with boundary.
{\em Methods Funct. Anal. Topology} 13 (2007)

\bibitem{Raymond} F. Raymond. The end point compactification of manifolds.
{\em Pacific J. Math.} 10 (1960).

\bibitem{Richards}  I. Richards. On the classification of noncompact surfaces, {\em Trans. Amer. Math. Soc.} 106 (1963).

\bibitem{Schleimer} S. Schleimer, {\em Notes on the curve complex}. Available from {\url http://homepages.warwick.ac.uk/~masgar/math.html}


\end{thebibliography}
\end{document}